\newcommand{\R}{\ensuremath{\mathbb{R}}}
\newcommand{\ov}{\overline}
\newcommand{\x}{\bx }
\newcommand{\y}{\boldsymbol{y}}
\newcommand{\bu}{\boldsymbol{u}}
\newcommand{\bv}{\boldsymbol{v}}
\newcommand{\q}{\mathbf{q}}
\newcommand{\f}{\boldsymbol{f}}
\newcommand{\g}{\boldsymbol{g}}
\newcommand{\w}{\boldsymbol{w}}
\newcommand{\B}{\mathcal{B}}
\newcommand{\A}{\mathcal{A}}
\newcommand{\parde}[2][]{\frac{\partial#1}{\partial#2}}
\newcommand{\e}{\varepsilon}
\newcommand{\C}{\ensuremath{\mathcal{C}}}
\newcommand{\bx}{\mathbf{x}}
\newcommand{\by}{\mathbf{y}}
\newcommand{\bz}{\mathbf{z}}
\newcommand{\bp}{\mathbf{p}}
\def\e{\varepsilon}
\newtheorem {theorem} {Theorem} 
\newtheorem {proposition} {Proposition}
\newtheorem {lemma} {Lemma}
\newtheorem {remark} {Remark}
\newtheorem {mtheorem} {Theorem}
\begin{document}
\allowdisplaybreaks
\title[Torus bifurcation in non-smooth systems]{Detecting Limit Tori in Non-Smooth Systems:\\ An Analytic Approach with Applications\\ to 3D Piecewise Linear Systems}

\begin{abstract}
This work investigates a class of non-autonomous $T$-periodic piecewise smooth differential systems and their associated time-$T$ maps.  Our main result provides an analytical approach for detecting, within this class of piecewise differential systems, isolated invariant tori associated with normally hyperbolic invariant closed curves of the time-$T$ map. To achieve this, we derive sufficient conditions under which smooth near-identity maps undergo a Neimark--Sacker bifurcation. As an application of our main result, we present a family of 3D piecewise linear differential systems exhibiting attracting and repelling isolated invariant tori which, moreover, persist under small perturbations. To the best of our knowledge, this family provides the first examples in which limit tori are analytically detected in piecewise linear systems.
\end{abstract}

\author{Murilo R. C\^{a}ndido$^1$, Douglas D. Novaes$^2$, and Joan S.G. Rivera$^2$}

\address{$^1$ Department of Mathematics and Computer Science, Faculty of Science and Technology São Paulo State University (UNESP), Rua Roberto Simonsen, 305 - Centro Educacional, P. Prudente, 19060-900, São Paulo, Brazil} 
\address{$^2$ Departamento de Matem\'{a}tica - Instituto de Matem\'{a}tica, Estat\'{i}stica e Computa\c{c}\~{a}o Cient\'{i}fica (IMECC) - Universidade
Estadual de Campinas (UNICAMP), \ Rua S\'{e}rgio Buarque de Holanda, 651, Cidade Universit\'{a}ria Zeferino Vaz, 13083-859, Campinas, SP,
Brazil}
\email{mr.candido@unesp.br}
\email{ddnovaes@unicamp.br}
\email{jsgaitanr@gmail.com}

\keywords{torus bifurcation, non-smooth systems, piecewise linear systems, Melnikov theory, averaging theory, Neimark-Sacker bifurcation}

\subjclass[2010]{Primary: 34C23, 34C29, 34C45}


\maketitle

\section{Introduction and outline of the main results}\label{section1}
Many natural phenomena display abrupt transitions in behavior, such as collisions, dry friction changes, or biological switches, that cannot be adequately described by smooth models. These discontinuities in the governing equations require mathematical frameworks capable of rigorously handling nonsmooth dynamics (see \cite{jeffrey18,jeffrey20}). To address the fundamental issue of defining solutions in such contexts, Filippov introduced a formulation based on differential inclusions, now known as Filippov’s convention, which provides a systematic approach for defining trajectories across discontinuity surfaces \cite{Filippov}. Systems governed by this framework, called Filippov systems, exhibit discontinuities along a switching set, typically assumed to be a smooth manifold. This formulation has found successful application in diverse fields, including sliding mode control \cite{utkin1978sliding,utkin2017sliding}, neuronal dynamics \cite{izhikevich2007dynamical}, ecological dynamics \cite{predatorprey,Kivan2011,Piltz2014}, mechanical and electromagnetic systems \cite{PiecewiseBernardo,jeffrey2018hidden,example4,example1}, and even market modeling \cite{MOLINADIAZ2024246}.

As in smooth dynamical systems, compact invariant sets serve as key structures for describing the dynamics. The first examples of such objects are equilibria and limit cycles. In higher-dimensional systems, invariant tori often arise, typically associated with periodic or quasiperiodic motion (see, for instance, \cite{GREBOGI1985354}). These geometric structures confine trajectories within bounded regions of the phase space, and their stability properties determine whether nearby trajectories converge toward or diverge from the torus.

The present study is structured around two primary goals. First, we develop an analytical method for deriving sufficient conditions that ensure the existence and stability of isolated invariant tori in a family of non-autonomous $T$-periodic piecewise smooth differential systems. These invariant tori are called {\it limit tori} and they are associated with invariant normally hyperbolic closed curves of the time-$T$ map, which, as we show, can be expressed as smooth near-identity maps. Second, we apply this theoretical framework to analyze a specific family of 3D piecewise linear differential systems, providing, to the best of our knowledge, the first analytical proof of the existence of limit  tori in such systems.

In the continuation of this section, we give a brief overview of our main results. Precise statements and rigorous formulations are postponed to the subsequent sections.

\subsection{Torus bifurcation in a class of nonsmooth system}

Averaging theory provides a classical and powerful framework for establishing the persistence of periodic motions in differential equations of the form $\dot{\bx}=\e F(t,\bx,\e)$. While its effectiveness in detecting periodic solutions is well known, considerable effort has been devoted to adapting this perturbative methodology to capture higher dimensional invariant objects, in particular invariant tori. Early contributions by Krylov, Bogoliubov, and Mitropolski\u{\i}~\cite{BM, BK} demonstrated that averaging techniques can be used to infer the presence of invariant tori in differential systems of kind $\dot{\bx}=\e F_1(t,\bx)$. Subsequent developments by Hale~\cite{hale1961integral} established a fundamental link between invariant tori of such a system and limit cycles of the autonomous differential system
\[
\dot \bx=\frac{1}{T}\int_0^T F_1(t,\bx)\,dt,
\]
which corresponds to the time average of the original one. More recently, Novaes and Pereira~\cite{novaes2024invariant} extended Hale's result by proving that this correspondence holds for a broader class of differential systems and at arbitrary orders of averaging. In the planar non-autonomous setting, these ideas were further refined by Novaes, Pereira, and C\^andido~\cite{pereira2023mechanism}, who provided conditions ensuring the bifurcation of normally hyperbolic invariant tori. Their approach is based on the continuation results of Chicone and Liu~\cite{chicone2000continuation} for rapidly oscillating systems. Finally, high-order averaging methods were also employed by Cândido and Novaes~\cite{candido2020torus} to show that a Hopf bifurcation in the truncated averaged equations gives rise to a torus bifurcation in the original system (see also~\cite{NP25}).

Although averaging theory has been extensively developed over the past decade to analyze periodic behavior of non-smooth differential systems (we can mention \cite{llibre2015averaging,LNM15,llibre2017averaging}), its application to the detection of isolated invariant tori remains largely unexplored. The present work constitutes a first step in this direction, laying the groundwork for extending averaging-based techniques for studying higher-dimensional limit sets in non-smooth differential systems and opening new avenues for future research.

Consider the following family of piecewise differential systems,
\begin{equation}\label{PSDE}
\dot{\bx }(t)=\sum_{i=1}^k \e^i F_i(t,\bx ;\alpha)+\e^{k+1} R(t,\bx ;\alpha,\e),\quad (t,\bx ;\alpha,\e)\in S^1\times \ov D\times \ov I \times (-\e_0,\e_0),
\end{equation}
where $S^1=\mathbb{R}/T\mathbb{Z},$ with $T>0$, $D\subset\mathbb{R}^2$ and $I\subset\mathbb{R}$ are open and bounded sets, and $\varepsilon_0>0$ is sufficiently small. In addition, $F_i$, $i=1,\ldots,k$, and $R$ are defined as
\[
F_i(t,\bx ;\alpha)=
\begin{cases}
F_i^0(t,\bx ;\alpha), & 0<t<\theta_1(\bx ;\alpha),\\[1mm]
F_i^1(t,\bx ;\alpha), & \theta_1(\bx ;\alpha)<t<\theta_2(\bx ;\alpha),\\[1mm]
\vdots\\[1mm]
F_i^n(t,\bx ;\alpha), & \theta_n(\bx ;\alpha)<t<T,
\end{cases}
\]
and
\[
R(t,\bx ;\alpha,\e)=
\begin{cases}
R^0(t,\bx ;\alpha,\e), & 0<t<\theta_1(\bx ;\alpha),\\[1mm]
R^1(t,\bx ;\alpha,\e), & \theta_1(\bx ;\alpha)<t<\theta_2(\bx ;\alpha),\\[1mm]
\vdots\\[1mm]
R^n(t,\bx ;\alpha,\e), & \theta_n(\bx ;\alpha)<t<T,
\end{cases}
\]
where the switching functions $\theta_j\colon \ov D\times \ov I\to S^1$, $j=1,\ldots,n$, as well as the functions $F_i^j:S^1\times \ov D\times \ov I \times \to\R^n$ and $R^j: S^1\times \ov D\times \ov I \times (-\e_0,\e_0)\to\R^n$, $i,j\in\{1,\ldots,n\}$, are assumed to be of class $C^r$, for some $r\ge k+1$.

We investigate the dynamics of system~\eqref{PSDE} in the extended phase space 
$\mathbb{S}^1 \times D$. To this end, we consider the associated time-$T$ map defined on $D$ as
\[
\mathcal{P}_T(\boldsymbol{x};\alpha,\varepsilon) := \varphi(T,\boldsymbol{x};\alpha,\varepsilon),
\]
where $\varphi(\cdot,\cdot;\alpha,\varepsilon)$ denotes the flow of~\eqref{PSDE}. 
By the Picard--Lindel\"of Theorem, applied on each region of the piecewise-defined 
differential system \eqref{PSDE}, one can find $\varepsilon_0>0$ small enough such that, 
for every $(\boldsymbol{x},\alpha,\e)\in \overline{D}\times\overline{I}\times(-\e_0,\e_0)$, 
$\varphi(t,\boldsymbol{x};\alpha,\varepsilon)$ is defined for all $t\in[0,T]$. 
Consequently, $\mathcal{P}_T(\cdot;\alpha,\e)$ is well defined for each $\alpha\in \overline{I}\times(-\e_0,\e_0)$.

Analogously to periodic solutions, which is associated to fixed points of $\mathcal{P}_T$, a closed curve $\Gamma\subset D$ satisfying $\mathcal{P}_T(\Gamma;\alpha,\e)=\Gamma$ implies the existence of an invariant torus in $\mathbb{S}^1 \times D$. Accordingly, throughout this work we define an invariant torus of system~\eqref{PSDE} as an invariant closed curve of its associated time-$T$ map.

Based on this characterization, Theorem~\ref{MainTheorem} establishes sufficient conditions ensuring that system~\eqref{PSDE} undergoes a torus bifurcation, which is equivalent to a Neimark--Sacker bifurcation in  the associated time-$T$ map. This latter bifurcation is characterized by the emergence of a normally hyperbolic invariant closed curve due to a change in the stability of a fixed point (see, for example, \cite{kuznetsov,NP25}). Theorem~\ref{MainTheorem} extends the results of~\cite{candido2020torus} to the broader setting of non-smooth systems.

The core of our approach is to prove that the time-$T$ map $\mathcal{P}_T$ depends smoothly on the variables $(\bx,\alpha,\varepsilon)$ and therefore admits a near-identity expansion, obtained via a Melnikov procedure, of the form
\[
\mathcal{P}_T(\bx;\alpha,\e)=\boldsymbol{f}(\bx;\alpha,\e)
= \bx + \e^r \boldsymbol{f}_r(\bx;\alpha) + \cdots + \e^{k} \boldsymbol{f}_{k}(\bx;\alpha)
+ \e^{k+1}\bar{F}(\bx,\alpha,\e),
\]
with $r\in\{1,\dots,k\}$. This allows us to reduce the problem to the analysis of a general smooth near-identity map. Following the framework developed in~\cite{candido2020torus}, Theorem~\ref{TheoremA} provides sufficient conditions under which such a map undergoes a Neimark--Sacker bifurcation. Accordingly, Theorem~\ref{MainTheorem} follows from Theorem~\ref{TheoremA}.

\subsection{Limit tori in 3D piecewise linear systems} 

In contrast to planar linear differential systems, which are known to be devoid of limit cycles, their planar piecewise counterparts may exhibit such phenomena. Early examples appeared in the book by Andronov \emph{et al.}~\cite{AndronovEtAl66} published in 1937. Since then, research has largely focused on planar piecewise linear systems with a single straight switching line, which nevertheless are capable of supporting rich dynamics. In this setting, Lum and Chua~\cite{lum1991global} constructed continuous systems with a single limit cycle and conjectured that continuity across the switching line prevents the appearance of more limit cycles. Freire \emph{et al.} proved this conjecture in~\cite{freire1998bifurcation} (see also \cite{CarmonaEtAl19b}). When the continuity assumption is relaxed, richer behaviors emerge. In this case, the first examples exhibiting two limit cycles were provided by Han and Zhang~\cite{han2010hopf} in 2010, who claimed that this number should be maximal. This conjecture was subsequently disproved by Huan and Yang~\cite{huan2012number}, who presented numerical evidence of examples with three limit cycles, which was later analytically confirmed by Llibre and Ponce~\cite{llibre2012three}. More recently, Carmona \emph{et al.}~\cite{Carmona2023} established the finiteness of the number of limit cycles for such systems.

Increasing the dimension, although linear 3D differential systems cannot support isolated invariant tori, in light of the preceding discussion, it is natural to ask whether their piecewise counterparts may admit isolated invariant tori. 

In this direction, we apply our results to prove in Theorem~\ref{mainresult} that the following $(\varepsilon,\alpha,b)$-family of piecewise linear differential systems exhibits a limit torus for suitable values of the parameters $\varepsilon,\alpha,b$. This limit torus is attracting for $b>0$ and repelling for $b<0$, and moreover persists under sufficiently small perturbations:
\begin{equation}\label{ds0intro}
	\begin{pmatrix}
x' \\
y' \\
z'
\end{pmatrix}
=
\begin{pmatrix}
-y \\
x \\
0
\end{pmatrix}
+\left\{
\begin{aligned}   
&\varepsilon\, A^{+}
\begin{pmatrix}
x \\
y \\
z
\end{pmatrix},& y>0,\\[0.2em]
&\varepsilon\, A^{-}
\begin{pmatrix}
x \\
y \\
z
\end{pmatrix}
+\varepsilon^2 B^{-}
\begin{pmatrix}
x \\
y \\
z
\end{pmatrix}, & y<0,
\end{aligned}
\right.
\end{equation}
where
\begin{equation}\label{ds0intro1}
A^{+}=\begin{pmatrix}
	0 & 0 & -\dfrac{4 \pi ^2 b}{8+9 \pi ^2} \\
	0 & -1 & -1 \\
	0 & 0 & \dfrac{1}{2}
\end{pmatrix},
\qquad
A^{-}=\begin{pmatrix}
	\alpha & 4 & -1 \\
	0 & 0 & 0 \\
	0 & -1 & 0
\end{pmatrix},
\qquad
B^{-}=\begin{pmatrix}
	0 & 0 & 0 \\
	0 & 0 & b \\
	0 & 0 & 0
\end{pmatrix}.
\end{equation}
To the best of our knowledge, this is the first 3D piecewise linear differential system for which the existence of a limit torus has been established analytically (see Figure~\ref{AppTorus} for its numerical detection).

\begin{figure}[H]
\begin{centering}
\begin{overpic}[width=12cm]{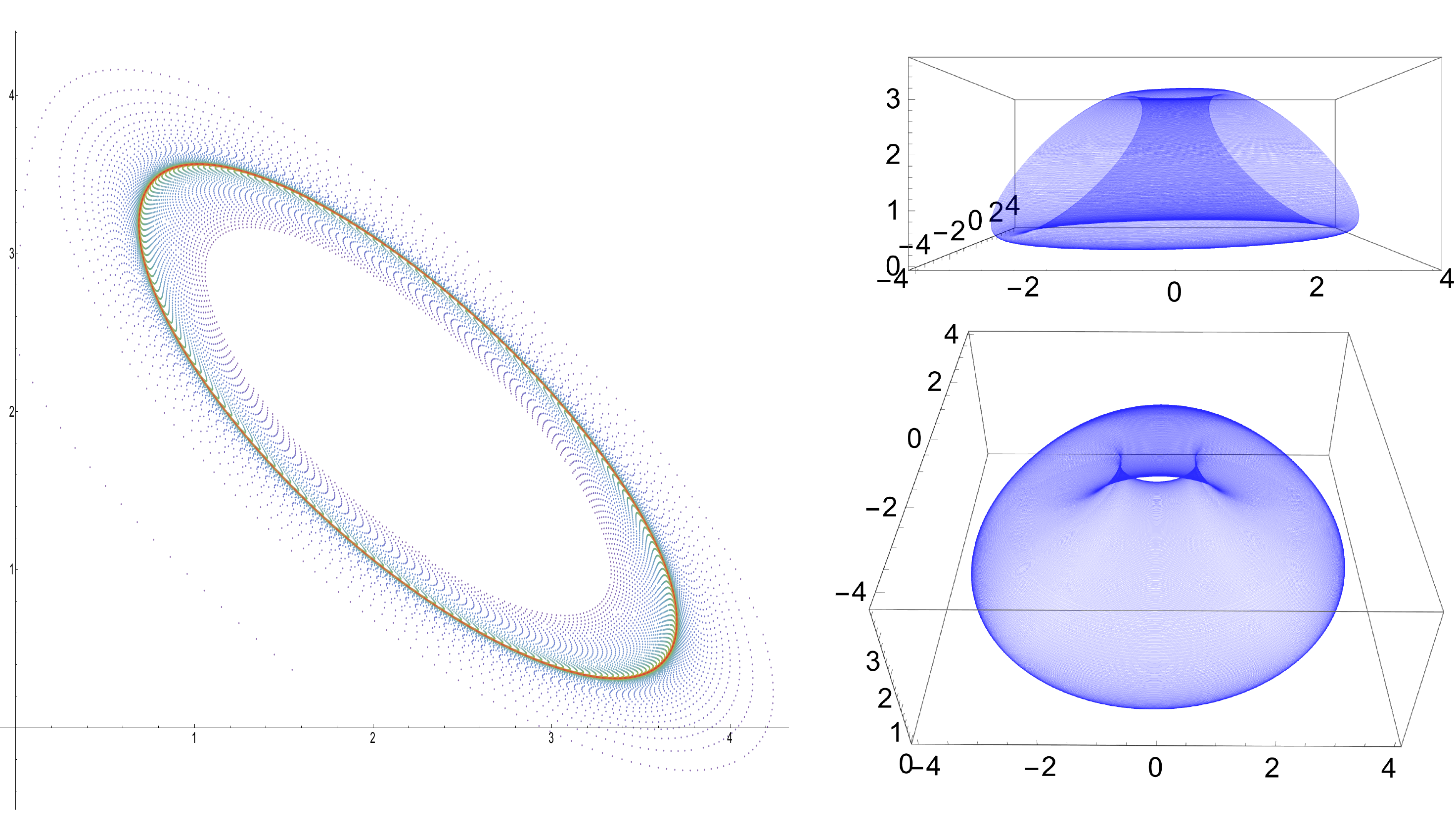}
\end{overpic}
\caption{%
Numerical simulation of systems~\eqref{ds0intro}--\eqref{ds0intro1} for $b=-5$, $\varepsilon=1/40$, and
$\alpha=\varepsilon(\pi^{2}/8-2)$. The left panel shows an invariant closed curve on the Poincaré section
$y=0$, $x>0$, of the Poincaré map, corresponding to an invariant torus. The right panel presents two views
of a trajectory in phase space, close to the associated invariant torus, obtained by numerically
integrating the system with initial condition $(3.669234340877,\,0,\,0.48488236396962971)$ over the time interval
$t\in[0,10000]$.}
\label{AppTorus}
\end{centering}
\end{figure}

\subsection{Organization of the paper}
Section~\ref{prel} introduces the basic notions and preliminary results required throughout the paper. In particular, Subsection~\ref{section2} revisits the perturbative methods for non-smooth systems, while Subsection~\ref{section3} reviews the Neimark--Sacker bifurcation for maps. In Section~\ref{section4}, we establish the smoothness properties of the time-$T$ map associated with piecewise smooth systems of the form~\eqref{PSDE}. Based on that, Section~\ref{section5} derives, in Theorem~\ref{TheoremA}, sufficient conditions for the occurrence of a Neimark--Sacker bifurcation in smooth near-identity maps. In Section~\ref{section6}, we extend in Theorem~\ref{MainTheorem} the results of~\cite{candido2020torus} to the class of piecewise smooth systems~\eqref{PSDE}, thereby obtaining sufficient conditions that guarantee the emergence of a limit torus in~\eqref{PSDE}. Finally, Section~\ref{AppliTorus} applies the developed theoretical framework to construct, in Theorem~\ref{mainresult}, a family of 3D piecewise linear differential systems that exhibits a limit torus.

\section{Basic notions and preliminaries results}\label{prel}
\subsection{Melnikov and Averaging methods}\label{section2}

Melnikov analysis and averaging theory are fundamental tools in dynamical systems and are widely used to detect isolated periodic solutions in systems of the form~\eqref{PSDE}. In the smooth setting, these two methods are closely related and yield equivalent bifurcation functions (see, for instance, \cite{novaes2021higher}), that is, functions whose simple zeros correspond to periodic solutions of the differential equation. In the non-smooth context, however, this equivalence does not always hold. In particular, several works have employed these techniques to study bifurcation phenomena and to estimate the number of limit cycles in differential systems of type~\eqref{PSDE}. For example, explicit formulas for bifurcation functions of arbitrary order were derived in \cite{llibre2017averaging} in the case where the functions $\theta_i$ in~\eqref{PSDE} are constant. In this setting, the first- and second-order bifurcation functions are obtained from~\cite[Proposition~2]{llibre2017averaging} and are given by
\begin{equation}\label{prom}
\begin{array}{l}
\displaystyle \g_1(\bx)=\int_{0}^{T}F_1(s,\bx)\,ds\quad \text{and}\vspace{0.3cm}\\
\displaystyle  \g_2(\bx)= \int_{0}^{T}\bigg[D_xF_1(s,\bx)
\int_{0}^{s}F_1(t,\bx)dt+F_2(s,\bx)\bigg]ds.
\end{array}
\end{equation}
These expressions coincide with the first- and second-order bifurcation functions in the smooth setting and are commonly referred to as averaged functions. However, as observed in~\cite{llibre2015averaging}, the function $\g_2$ does not always govern the bifurcation of isolated periodic solutions in non-smooth differential systems. In such cases, additional degeneracy conditions on the switching manifold are required to ensure that $\g_2$ effectively plays the role of the second-order bifurcation function. This issue is highlighted in~\cite[Theorem B]{llibre2015averaging}, where hypothesis (Hb2) is introduced precisely to guarantee that $\g_2$ serves as the second-order bifurcation function.

The derivation of higher-order bifurcation functions for the general differential system~\eqref{PSDE} requires obtaining a series expansion of the associated time-$T$ map around $\varepsilon = 0$, namely,
\begin{equation}\label{mapintroT}
\mathcal{P}_{T}(\bx;\alpha,\e) = \bx+\e\, \Delta_{1}(\bx,\alpha)+\e^{2}\Delta_{2}(\bx,\alpha)+\cdots+\e^{k}\, \Delta_{k}(\bx,\alpha)+\mathcal{O}(\e^{k+1}).
\end{equation}
The computation of this expansion is commonly referred to as the Melnikov procedure, and the function $\Delta_i$, for $i\in\{1,\ldots,k\}$, is called the Melnikov function of order~$i$.  As shown in~\cite{bastos2019melnikov},
\begin{equation}\label{mel1}
\Delta_1(\bx,\alpha)=\g_1(\bx,\alpha)\quad \text{and}\quad \Delta_2(\bx;\alpha)= \g_2(\bx;\alpha) + \widetilde{\g}_2(\bx;\alpha),
\end{equation}
where $\g_1$ and $\g_2$ are defined in \eqref{prom}, and
\[
\widetilde{\g}_2(\bx;\alpha)  = \sum_{j=1}^n \Big(F_1^{j-1}(\theta_j(\bx),\bx,\alpha) -
F_1^{j}(\theta_j(\bx),\bx,\alpha)\Big)
D_{\bx}\theta{_j}(\bx)\int_0^{\theta_{j}(\bx)}\hspace{-0.3cm}\,F_1(s,\bx,\alpha)\,ds.
\]

Notice that the second-order averaged function $\g_2(x)$ deviates from the second-order Melnikov function of \eqref{PSDE} due to the additional term $\widetilde{\g}_2(\mathbf{x};\alpha)$, which depends on the discontinuity jump and on the geometry of the switching manifold, quantified respectively by 
\[
F_1^{j-1}(\theta_j(\bx),\bx) -
F_1^{j}(\theta_j(\bx),\bx)\quad \text{and}\quad D_\bx\theta_j(\bx)\int_0^{\theta_j(\bx)}\hspace{-0.3cm}F_1(s,\bx)ds.
\] 

We emphasize that the applicability of bifurcation theory to the time-$T$ map~\eqref{mapintroT} does not depend on whether the underlying differential equation is smooth or piecewise smooth, but rather on the regularity properties of the associated time-$T$ map. Accordingly, in Section~\ref{section4} we prove that the time-$T$ map of~\eqref{PSDE} is smooth.

\subsection{Neimark-Sacker Bifurcation}\label{section3}

Consider the discrete dynamical system in the plane described by
\begin{equation}\label{two-map}
	\bx\mapsto \f(\bx,\alpha), 
\end{equation}
wherein $\f$ represents a map possessing sufficient smoothness, $\bx=(x_{1},x_{2})^{T}$, and $\alpha\in\R$. Furthermore, we assume that $\bx=0$ is a fixed point for $\f$ at $\alpha=0$, and that the Jacobian matrix $\mathbf{A}=D_{\bx}\, \f(0,\alpha)$ exhibits simple eigenvalues $\lambda_{1,2}=e^{\pm i\theta_{0}}$, with $0<\theta_{0}<\pi$. Given the smoothness properties of $\f$, the Taylor expansion of $\f(\bx,0)$ in the neighborhood of $\bx=0$ can be expressed as follows

\begin{equation*}\label{expT}
\f(\bx,0)=\mathbf{A}\,\bx +\frac{1}{2}\,\mathbf{B}(\bx,\bx) +\frac{1}{6}\,\mathbf{C}(\bx,\bx,\bx) + \mathcal{O}(||\bx||^4),    
\end{equation*}
where $\mathbf{A}=D_{\bx}F(0,0)$, and $B(\bx,\by)$ and $C(\bx,\by,\bz)$ are, respectively the bilinear and trilinear forms that appear in the Taylor expansion. 

Now, let be $\q\in\C^{2}$ a complex eigenvector of $\mathbf{A}$ related to the eigenvalue $e^{i\theta}$, i.e., $\mathbf{A}\,\q=e^{i\theta}\q$. Analogously, consider $\bp \in \C^{2}$ to be an eigenvector of $\mathbf{A}^{T}$ related to the eigenvalue $e^{-i\theta}$, which implies $\mathbf{A}^{T}\,\bp=e^{-i\theta}\bp$. In this context, $\langle\bp,\q\rangle=\bar{\bp}^{T}\q$ refers to the conventional inner product within $\C^{2}$. 

Assuming $e^{ik\theta_{0}}\neq 1$ for $k\in\{1,2,3,4\}$, the coefficient $c_{1}$ can be computed using the following formula
\begin{equation}\label{c1(0)formula}
c_{1}=  \frac{g_{20}\,g_{11}(1-2\lambda_{0})}{2(\lambda_{0}^{2}-\lambda_{0})}  + \frac{|g_{11}|^{2}}{1-\bar{\lambda}_{0}} + \frac{|g_{02}|^{2}}{2(\lambda_{0}^{2}-\bar{\lambda}_{0})} +\frac{g_{21}}{2}\cdot
\end{equation}
Here, $\lambda_{0}=e^{i\theta_{0}}$, and for $\alpha=0$ we have
\begin{equation}\label{expressionsg_ij's}
  g_{02}=\left<\mathbf p,B(\mathbf {\bar{q}},\mathbf {\bar{q}})\right>,\,
  g_{11}=\left<\mathbf p,B(\mathbf q,\mathbf {\bar{q}})\right>,\,
  g_{20}=\left<\mathbf p,B(\mathbf q,\mathbf q)\right>,\,   g_{21}=\left<\mathbf p, C(\mathbf q,\mathbf q, \mathbf {\bar{q}})\right>.   
 \end{equation}
 
 We refer to the scalar $\ell_{1}=\text{Re}[e^{-i\theta_{0}}\,c_{1}]$ as the \textit{first Lyapunov coefficient}. Moreover, from Equation \eqref{c1(0)formula}, we can get the following formula 
\begin{equation}\label{firstLyapunovcoefficient}
     {\ell_{1}}= \textrm{Re}\left(\frac{e^{-i\theta}g_{21}}{2} \right) - \textrm{Re}\left(\frac{(1-2\,e^{i\theta})\,e^{-2i\theta}}{2(1-e^{i\theta})}\,g_{20}g_{11}\right) -\frac{1}{2}|g_{11}|^{2} -\frac{1}{4}|g_{02}|^{2},
\end{equation}
where $g_{20}$, $g_{21}$, $g_{02}$ and $g_{11}$ can be computed using the equation \eqref{expressionsg_ij's}.

In what follows, we state the Neimark--Sacker bifurcation Theorem.

\begin{theorem}\cite[page 135]{kuznetsov}\label{NSBTheorem}
Assume that for $|\alpha|$ sufficiently small $\bx=0$ is a fixed point of the map \eqref{two-map} having eigenvalues $r(\alpha)\,e^{\pm i\varphi(\alpha)}$ with $r(0)=1$ and $\varphi(0)=\theta$, $0<\theta<\pi$. Suppose that the following assertions are satisfied
\begin{itemize}
    \item[(1).] $r'(0)\neq 0$,
    \item[(2).] $e^{ik\theta}\neq 1$, for $k=1,2,3,4$,
    \item[(3.)] $\ell_{1}\neq 0$.
\end{itemize}
Then, there exist an open set $U\subset \R^{2}$ around the fixed point $\bx=0$ and an open interval $I\subset \R$ around $\alpha=0$ such that 
\begin{itemize}
 \item When $\alpha\in I$ and $\ell_{1}<0$, we have the following possibilities: if $\alpha \,r'(0)\leq 0$, then $\x=0$ is attracting; if $\alpha \,r'(0)>0$, then $\x=0$ is repelling and there exists a unique asymptotically attracting closed invariant curve $\gamma_{\alpha}$ in $U$ of the map \eqref{two-map}.
    \item When $\alpha\in I$ and $\ell_{1}>0$, we have the following possibilities: if $\alpha \,r'(0)\geq 0$, then $\x=0$ is repelling; if $\alpha\, r'(0)<0$, then $\x=0$ is attracting and there exists a unique repelling closed invariant curve $\gamma_{\alpha}$ in $U$ of the map \eqref{two-map}.
\end{itemize}
\end{theorem}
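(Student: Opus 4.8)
The plan is to follow the classical normal-form route, since the three hypotheses are exactly what is needed to bring the planar map to Poincaré normal form and then to read off the bifurcation from a scalar radial map. First I would diagonalize: using the eigenvector $\q$ with $\mathbf{A}\q=e^{i\theta}\q$ and writing a point as $\bx=z\,\q+\bar z\,\bar{\q}$ with $z\in\C$, the map \eqref{two-map} becomes a single complex equation
\[
z\mapsto \lambda(\alpha)\,z+\sum_{j+k\ge 2}\frac{g_{jk}(\alpha)}{j!\,k!}\,z^{j}\bar z^{k},
\]
where $\lambda(\alpha)=r(\alpha)\,e^{i\varphi(\alpha)}$ and the quadratic and cubic coefficients $g_{20},g_{11},g_{02},g_{21}$ at $\alpha=0$ are precisely those displayed in \eqref{expressionsg_ij's}.

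Next I would reduce to normal form. The non-resonance hypothesis~(2), $e^{ik\theta}\ne 1$ for $k\in\{1,2,3,4\}$, guarantees that every quadratic term and every cubic term except the resonant monomial $z^{2}\bar z=z|z|^{2}$ can be eliminated by a smooth, invertible, near-identity change of coordinates. The denominators created in this reduction are exactly $\lambda_{0}^{2}-\lambda_{0}$, $1-\bar\lambda_{0}$ and $\lambda_{0}^{2}-\bar\lambda_{0}$, which are nonzero precisely under hypothesis~(2); this is the same bookkeeping that produces the coefficient $c_{1}$ in \eqref{c1(0)formula}. One is left with
\[
w\mapsto \lambda(\alpha)\,w+c_{1}(\alpha)\,w\,|w|^{2}+\mathcal{O}(|w|^{4}),
\]
where $c_{1}(0)$ is given by \eqref{c1(0)formula}.

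Passing to polar coordinates $w=\rho\,e^{i\psi}$ and using $|\lambda(\alpha)|=r(\alpha)$, the truncated normal form decouples into
\[
\rho\mapsto r(\alpha)\,\rho+a(\alpha)\,\rho^{3}+\mathcal{O}(\rho^{4}),\qquad \psi\mapsto \psi+\varphi(\alpha)+\mathcal{O}(\rho^{2}),
\]
with $a(0)$ carrying the same sign as $\ell_{1}=\mathrm{Re}(e^{-i\theta}c_{1})$, cf.~\eqref{firstLyapunovcoefficient}. The scalar map $\rho\mapsto r(\alpha)\rho+a(\alpha)\rho^{3}$ has the trivial fixed point $\rho=0$, whose stability flips as $r(\alpha)$ crosses $1$; hypothesis~(1), $r'(0)\ne 0$, makes this crossing transversal. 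A nontrivial fixed point $\rho_{*}(\alpha)^{2}=(1-r(\alpha))/a(\alpha)$ exists exactly when the right-hand side is positive, and its presence and stability type are dictated by the signs of $r'(0)$ and $a(0)$, hence of $\ell_{1}$. Matching the four sign combinations reproduces the four cases of the statement: for $\ell_{1}<0$ the bifurcating invariant object is attracting and appears precisely when $\x=0$ is repelling, and symmetrically for $\ell_{1}>0$ a repelling curve coexists with an attracting fixed point.

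The genuine difficulty is that the nontrivial fixed point $\rho_{*}$ yields an invariant circle only for the truncated normal form; I must show that this circle persists for the full map, whose discarded $\mathcal{O}(\rho^{4})$ and $\psi$-dependent remainder couples radius and angle. Persistence follows because the truncated invariant circle is normally hyperbolic, the normal contraction/expansion rate being governed by $|a(0)|\,\rho_{*}^{2}\ne 0$, so I would either invoke the normally hyperbolic invariant manifold (Fenichel) persistence theorem or, more constructively, seek the curve as a graph $\rho=\rho(\psi)$ and verify that the induced operator on a suitable closed ball of $C^{0}$ (or $C^{1}$) functions is a contraction, yielding existence, uniqueness, and the asserted hyperbolicity. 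This upgrade from the formal invariant circle of the normal form to a genuine invariant closed curve of \eqref{two-map} is the heart of the theorem; by contrast, the preceding normal-form reduction, though computational, is routine once hypothesis~(2) secures the nonvanishing of the denominators.
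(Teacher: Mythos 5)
The paper offers no proof of this theorem: it is quoted directly from \cite{kuznetsov} (page 135), so there is no in-paper argument to compare against, and your sketch is essentially the standard proof from that reference --- complexification via the eigenvector $\q$, elimination of non-resonant quadratic and cubic terms (where hypothesis (2) is exactly what keeps the divisors $\lambda_0^{j}\bar{\lambda}_0^{k}-\lambda_0$ away from zero), reduction to the radial map $\rho\mapsto r(\alpha)\rho+a(\alpha)\rho^{3}+\cdots$ with $a(0)$ of the sign of $\ell_1$, and persistence of the truncated invariant circle by a graph-transform/normal-hyperbolicity argument. The outline is correct, and you rightly single out the persistence step (where the normal contraction rate is only $O(|\alpha|)$ while the discarded remainder is higher order) as the one genuinely delicate point.
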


\begin{remark}\label{rem:normhypcurve}
In \cite{NP25}, it was observed that the invariant curve obtained in Theorem \ref{NSBTheorem} is normally hyperbolic (see \cite{Fenichel1,hirschpughshub,W94}). Although we do not discuss this property here, it is worth emphasizing that it ensures the persistence of the invariant curve under small perturbations.
\end{remark}

\section{Regularity of the time-$T$ map}\label{section4}

In this section, we investigate the regularity of the time-$T$ map, a fundamental property underlying our approach. To this end, we introduce the $C^r$ functions
\begin{equation*}\label{notation}
F^{j}(t,\bx;\alpha,\e)
:= \sum_{i=1}^{k} \e^{i}\,F_{i}^{j}(t,\bx;\alpha)
   + \e^{k+1}\,R^{j}(t,\bx;\alpha,\e),
\end{equation*}
for $j=0,1,\ldots,n$. With this notation, the solution of the piecewise differential system \eqref{PSDE} can be written in the form
\begin{equation}\label{solutions}
\varphi(t,\bx;\alpha,\e)=
\begin{cases}
\varphi_{0}(t,\bx;\alpha,\e), & 0 \le t \le \tau_{1}(\bx;\alpha,\e),\\[2mm]
\varphi_{1}(t,\bx;\alpha,\e), & \tau_{1}(\bx;\alpha,\e) \le t \le \tau_{2}(\bx;\alpha,\e),\\
\hspace{2cm}\vdots \\[-1mm]
\varphi_{n}(t,\bx;\alpha,\e), & \tau_{n}(\bx;\alpha,\e) \le t \le T,
\end{cases}
\end{equation}
where $\varphi_{0}$ denotes the solution of the Cauchy problem
\begin{equation}\label{PS2}
\begin{cases}
\dot{\bx}(t) = F^{0}(t,\bx;\alpha,\e),\\
\bx(0)=\bx_{0},
\end{cases}
\end{equation}
and, for $j=1,\ldots,n$, $\varphi_{j}$ is defined as the solution of the Cauchy problem
\begin{equation}\label{re}
\begin{cases}
\dot{\bx}(t) = F^{j}(t,\bx;\alpha,\e),\\[1mm]
\bx\!\left(\tau_{j}(\bx;\alpha,\e)\right)
= \varphi_{j-1}\!\left(\tau_{j}(\bx;\alpha,\e),\bx;\alpha,\e\right).
\end{cases}
\end{equation}

Here, $\tau_{j}(\bx;\alpha,\e)$ denotes the switching time at which the trajectory
$\varphi_{j-1}(t,\bx;\alpha,\e)$, starting from
$\varphi_{j-1}(\tau_{j-1}(\bx;\alpha,\e),\bx;\alpha,\e)$ at
$t=\tau_{j-1}(\bx;\alpha,\e)$, intersects the graph of $\theta_{j}$, namely the set
\[
\{(\theta_{j}(\bx),\bx) : \bx \in D\},
\]
see Figure~\ref{SolutionPSDE}. Accordingly, the switching times satisfy the implicit relations
\begin{equation*}\label{flyingtimeeq}
\tau_{j}(\bx;\alpha,\e)
= \theta_{j}\!\left(\varphi_{j-1}\!\left(\tau_{j}(\bx;\alpha,\e),\bx;\alpha,\e\right)\right),
\qquad j=1,\ldots,n,
\end{equation*}
with $\tau_{0}(\bx;\alpha,\e)=0$ and $\tau_{n+1}(\bx;\alpha,\e)=T$.

\begin{figure}[ht!]
\begin{centering}
\begin{minipage}[t]{0.95\textwidth}
       \vspace{0pt}
       \centering
      \begin{overpic}[width=13cm]{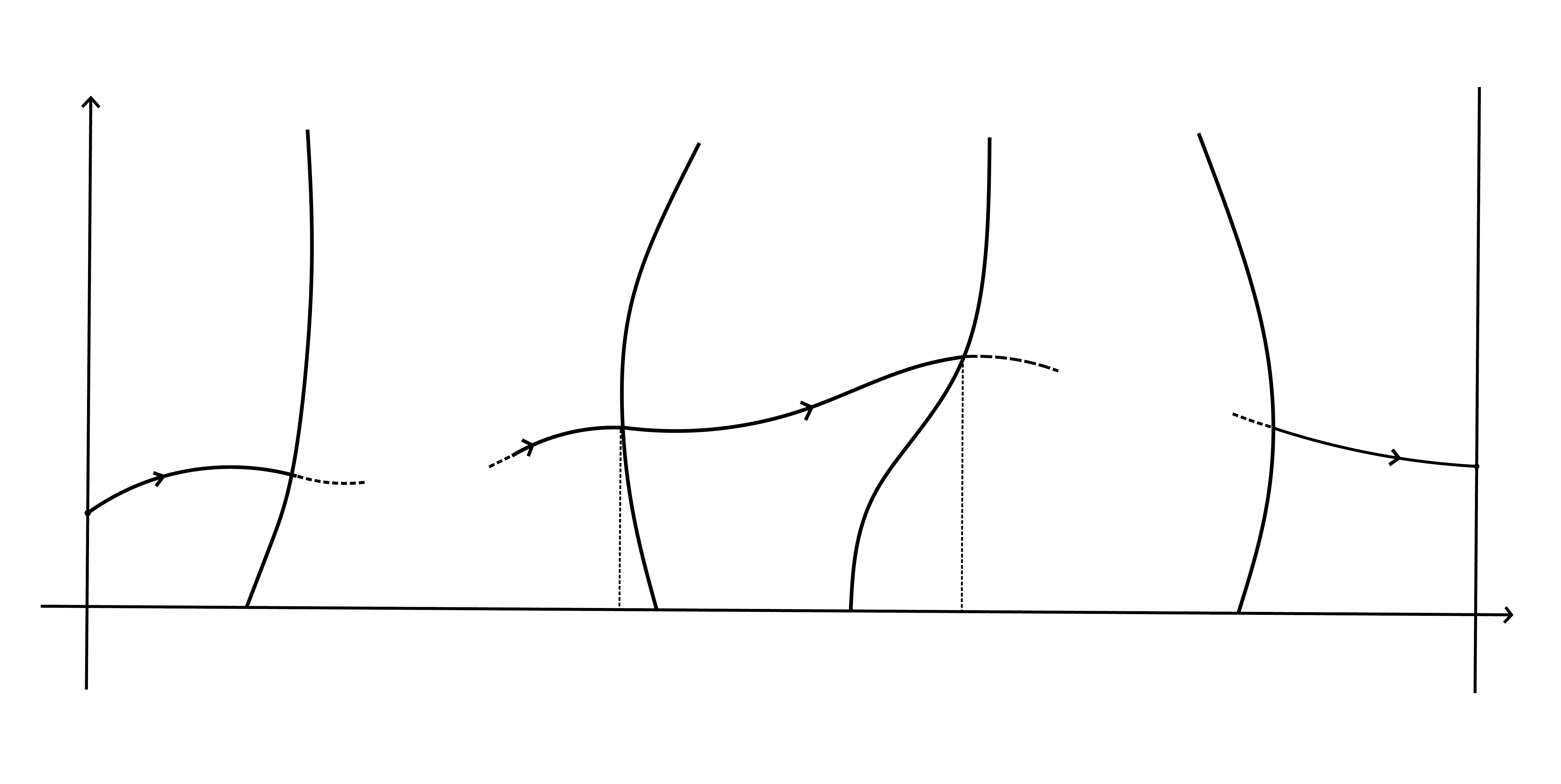}
\put(36,7){\small{$\tau_{j-1}(\bx_{0},\alpha,\e)$}}
\put(58,7){\small{$\tau_{j}(\bx_{0},\alpha,\e)$}}
\put(98,10){\small{$t$}}
\put(93.5,3){\small{$T$}}
\put(5,45){\small{$\bx$}}
\put(2,17){\small{$\bx_{0}$}}
\put(95,20){\tiny{$\varphi(T,\bx_{0};\alpha,\e)$}}
\put(7,22){\tiny{$\varphi_{0}(t,\bx_{0};\alpha,\e)$}}
\put(44,27){\tiny{$\varphi_{j-1}(t,\bx_{0};\alpha,\e)$}}
\put(82.5,23.5){\tiny{$\varphi_{n}(t,\bx_{0};\alpha,\e)$}}
\put(16,43){\small{$\theta_{1}(\bx,\alpha)$}}
\put(29,43){\small{$\cdots$}}
\put(41,43){\small{$\theta_{j-1}(\bx,\alpha)$}}
\put(60,43){\small{$\theta_{j}(\bx,\alpha)$}}
\put(70,43){\tiny{$\cdots$}}
\put(74,43){\small{$\theta_{n}(\bx,\alpha)$}}
\end{overpic}  

\end{minipage}
\caption{\small{Schematic representation of a crossing solution $\varphi$ of the piecewise smooth system} \eqref{PSDE}.}
\label{SolutionPSDE}    
\end{centering}
\end{figure} 

The time-$T$ map of the piecewise smooth system \eqref{PSDE} is then defined as follows
\begin{equation}\label{Tmap}
\mathcal{P}_T \colon {D} \times {I} \times (-\varepsilon_0,\varepsilon_0) \to \overline{D}, 
\qquad 
\mathcal{P}_T(\boldsymbol{x};\alpha,\varepsilon) := \varphi(T,\boldsymbol{x};\alpha,\varepsilon),
\end{equation}

Note that, by the definition of $\varphi$, 
\[
\mathcal{P}_T(\boldsymbol{x};\alpha,\varepsilon) = \varphi_n(T,\boldsymbol{x};\alpha,\varepsilon).
\] 
Therefore, the regularity of the time-$T$ map~\eqref{Tmap} follows from the regularity of the flow $\varphi_n$. Accordingly, in the following result we show that the time-$T$ map is smooth by proving, via induction, that each function $\varphi_j$ in~\eqref{solutions} is smooth, based on the analysis of the Cauchy problems~\eqref{PS2}--\eqref{re}.

\begin{proposition}\label{reguTmap}
Suppose that, for each $j=0,1,\ldots,n$ and $i=1,\ldots,k$, the functions
\[
F_i^j : \mathbb{S}^1 \times \overline{D} \times \overline{I} \longrightarrow \mathbb{R}^2
\quad\text{and}\quad
R^j : \mathbb{S}^1 \times \overline{D} \times \overline{I} \times (-\varepsilon_0,\varepsilon_0)
\longrightarrow \mathbb{R}^2
\]
appearing in system~\eqref{PSDE} are of class $C^{r}$, for some $r \ge k+1$.  
Then there exists $\varepsilon_1>0$ such that the associated time-$T$ map $\mathcal{P}_T$
is also of class $C^{r}$ on $D \times I \times (-\varepsilon_1,\varepsilon_1).$
\end{proposition}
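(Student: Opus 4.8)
The plan is to argue by induction on $j \in \{0,1,\ldots,n\}$, showing that each solution piece $\varphi_j$ appearing in \eqref{solutions} is of class $C^r$ as a function of $(t,\bx,\alpha,\e)$ on a suitable domain, and then to conclude by evaluating $\mathcal{P}_T = \varphi_n(T,\cdot)$. The decisive structural feature to exploit is that the right-hand side of \eqref{PSDE} carries an overall factor of $\e$: at $\e=0$ the vector fields $F^{j}(t,\bx;\alpha,0)$ vanish, so each unperturbed flow is the identity, $\varphi_j(t,\bx;\alpha,0)=\bx$. This near-identity structure is what will guarantee that, for small $\e$, every intersection of a trajectory with a switching graph $\{t=\theta_j(\bx)\}$ is transversal, which is precisely the nondegeneracy needed to apply the implicit function theorem to the switching times.

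For the base case $j=0$, the field $F^0$ is $C^r$ (a finite $\e$-combination of the $C^r$ data $F_i^0$ and $R^0$), so the classical theorem on $C^r$-dependence of solutions of a Cauchy problem on initial conditions and parameters, applied to \eqref{PS2}, yields that $\varphi_0(t,\bx;\alpha,\e)$ is $C^r$ in all its arguments. For the inductive step I would assume $\varphi_{j-1}$ is $C^r$ and proceed in two stages. First, to establish the regularity of the switching time $\tau_j$, set $G(\tau,\bx,\alpha,\e) := \tau - \theta_j\big(\varphi_{j-1}(\tau,\bx;\alpha,\e)\big)$, which is $C^r$ by the inductive hypothesis and the smoothness of $\theta_j$. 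Since $\partial_t\varphi_{j-1}(t,\bx;\alpha,0)=F^{j-1}(t,\bx;\alpha,0)=0$, one obtains $\partial_\tau G|_{\e=0}=1$, and $G=0$ is solved at $\e=0$ by $\tau=\theta_j(\bx)\in(0,T)$. By compactness of $\overline D\times\overline I$ and continuity, $\partial_\tau G$ remains bounded away from $0$ for $|\e|$ below some threshold, so the implicit function theorem provides a unique $C^r$ solution $\tau_j(\bx;\alpha,\e)$; shrinking the threshold keeps the $\tau_j$ inside $(0,T)$ and in the correct order $\tau_1<\cdots<\tau_n$ inherited from $\theta_1<\cdots<\theta_n$.

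Second, I would treat the Cauchy problem \eqref{re}, whose $C^r$ vector field $F^{j}$ is supplemented with the initial datum $\varphi_{j-1}(\tau_j,\bx;\alpha,\e)$ prescribed at the \emph{variable} initial time $\tau_j(\bx;\alpha,\e)$. To secure $C^r$ dependence on this initial time as well, I would autonomize the equation by adjoining $\dot s=1$ and express $\varphi_j$ through the ($C^r$) flow of the resulting autonomous $C^r$ system; since $\tau_j$ and $\varphi_{j-1}(\tau_j,\cdot)$ are $C^r$ by the previous stage, the composition $\varphi_j$ is $C^r$. This closes the induction, and evaluating at $t=T$ shows that $\mathcal{P}_T$ is $C^r$ on $D\times I\times(-\e_1,\e_1)$, with $\e_1$ the minimum of the finitely many thresholds produced along the induction. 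The main obstacle is the first stage: verifying that the crossings are transversal uniformly over the compact parameter region, so that a single $\e_1>0$ works, and — a closely related point — ensuring that the trajectory genuinely crosses (rather than slides along) each switching manifold. Both issues are resolved by the identity $\partial_\tau G|_{\e=0}=1$, that is, by the near-identity nature of the flow.
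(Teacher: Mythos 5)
Your proposal is correct and follows essentially the same route as the paper: induction on the solution pieces $\varphi_j$, the implicit function theorem applied to the switching-time equation (your $G$ is just the negative of the paper's $h_j$, with the nondegeneracy $\partial_\tau G|_{\e=0}=1$ coming from the same near-identity structure at $\e=0$), compactness of $\overline D\times\overline I$ to get a uniform $\e_1$, and composition with the $C^r$ flow started at the variable initial time $\tau_j$. Your explicit autonomization via $\dot s=1$ is merely an unpacking of the paper's direct appeal to $C^r$ dependence of $\psi_j(t,t_0,\bx_0,\alpha,\e)$ on the initial time $t_0$.
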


\begin{proof}
From \eqref{PS2} and \eqref{re}, we have
$$ \frac{\partial\varphi_{j}}{\partial t}(t,\bx; \alpha,\e) = F^{j}(t,\varphi_{j}(t,\bx; \alpha,\e);\alpha,\e), \quad\text{for}\quad j=0,1,\cdots, n,$$
with the initial values
\begin{equation*}\label{recurrence}
\begin{aligned}
\varphi_{0}(0,\bx;\alpha,\e)&=\bx,\\ 
\varphi_{j}(\tau_{j}(\bx;\alpha,\e),\bx;\alpha,\e)&=\varphi_{j-1}(\tau_{j}(\bx;\alpha,\e),\bx;\alpha,\e), \quad\text{for}\quad j=1,2,\cdots,n. 
\end{aligned}    
\end{equation*}
In what follows, we prove by induction that the functions $\varphi_j$,
$j=0,1,\ldots,n$, are of class $C^{r}$.

For $j=0$, $\varphi_0$ is defined as the solution of the initial value problem
\begin{align*}
\frac{\partial \varphi_{0}}{\partial t}(t,\bx;\alpha,\e)
&= F^{0}\bigl(t,\varphi_{0}(t,\bx;\alpha,\e);\alpha,\e\bigr),\\
\varphi_{0}(0,\bx;\alpha,\e) &= \bx .
\end{align*}
Since $F^{0}$ is of class $C^{r}$, the result on differentiable
dependence of solutions on initial conditions and parameters implies that
$\varphi_{0}(t,\bx;\alpha,\e)$ is a $C^{r}$ function.

Assume now, as an inductive hypothesis, that $\varphi_{j-1}$ is of class
$C^{r}$. We prove that $\varphi_{j}$ is also $C^{r}$. To this end, we first show
that the switching time $\tau_{j}$ is a $C^{r}$ function.

Define the function $h_{j}\colon \mathbb{R}\times \ov D\times \ov I\times(-\e_{0},\e_{0}) \longrightarrow
\mathbb{R}$ by
\[
h_{j}(\beta,\bx,\alpha,\e)
= \theta_{j}\bigl(\varphi_{j-1}(\beta,\bx;\alpha,\e)\bigr)-\beta .
\]
For $\e=0$, this reduces to
\[
h_{j}(\beta,\bx,\alpha,0)=\theta_{j}(\bx)-\beta .
\]
Hence, for each $(\bx,\alpha)\in \ov D\times \ov I$,
\[
h_{j}(\theta_{j}(\bx),\bx,\alpha,0)=0,
\qquad
\frac{\partial h_{j}}{\partial \beta}(\theta_{j}(\bx),\bx;\alpha,0)=-1\neq 0 .
\]
Since $\overline D \times \overline I$ is compact, by Implicit Function Theorem, there exist a constant $\varepsilon_{1}\in(0,\varepsilon_{0})$ and a unique function
\[
\xi \colon \overline D \times \overline I \times (-\varepsilon_{1},\varepsilon_{1}) \longrightarrow \mathbb{R}
\]
of class $C^{r}$ satisfying
\[
\xi(\bx,\alpha,0)=\theta_j(\bx)
\quad\text{and}\quad
h_{j}\bigl(\xi(\bx,\alpha,\varepsilon),\bx,\alpha,\varepsilon\bigr)=0,
\]
for all $(\bx,\alpha,\varepsilon)\in \overline D \times \overline I \times (-\varepsilon_{1},\varepsilon_{1})$. 

Since, by definition, $\tau_{j}$ satisfies $\tau_{j}(\bx;\alpha,0)=\theta_j(\bx)$ and $h_{j}\bigl(\tau_{j}(\bx;\alpha,\e),\bx,\alpha,\e\bigr)=0,$
it follows from uniqueness that
\[
\xi (\bx,\alpha,\e)=\tau_{j}(\bx;\alpha,\e),
\quad
\forall\,(\bx,\alpha,\e)\in \ov D\times \ov I\times(-\e_{1},\e_{1}) .
\]
Therefore, we conclude that $\tau_{j}$ is a $C^{r}$ function on $D\times I\times(-\e_{1},\e_{1})$.

Finally, let $\psi_{j}(t,t_{0},\bx_{0},\alpha,\e)$ denote the solution of the
initial value problem
\begin{equation*}\label{FGPC}
\begin{cases}
\bx' = F^{j}(t,\bx,\alpha,\e),\\
\bx(t_{0}) = \bx_{0}.
\end{cases}
\end{equation*}
By the differentiable dependence of solutions on initial conditions and
parameters, $\psi_{j}$ is of class $C^{r}$. Moreover, taking into account that
\begin{equation*}\label{varequality}
\varphi_{j}(t,\bx;\alpha,\e)
=\psi_{j}\!\left(t,\tau_{j}(\bx;\alpha,\e),
\varphi_{j-1}\bigl(\tau_{j}(\bx;\alpha,\e),\bx;\alpha,\e\bigr),
\alpha,\e\right),
\end{equation*}
and since $\tau_{j}$ is of class $C^{r}$ and, by the inductive hypothesis,
$\varphi_{j-1}$ is also $C^{r}$, it follows that $\varphi_{j}$ is of class
$C^{r}$.

This completes the inductive step and proves that $\varphi_j$ is of class $C^r$ for every $j\in\{0,\ldots,n\}$  $D \times I \times (-\varepsilon_1,\varepsilon_1).$ In particular, the time-$T$ map $\mathcal{P}_T$ is also of class $C^{r}$ on $D \times I \times (-\varepsilon_1,\varepsilon_1).$
\end{proof}

\section{Neimark-Sacker Bifurcation for Near-Identity Maps}\label{section5}

Consider an $(\alpha,\varepsilon)$--family of $C^{k}$ two-dimensional near-identity maps
\[
(\bx,\alpha,\varepsilon)\in D \times I \times (-\varepsilon_{0},\varepsilon_{0})
\mapsto\;
\f(\bx;\alpha,\varepsilon),
\]
where
\begin{equation}\label{Identityperturbation}
\f(\bx;\alpha,\varepsilon)
= \bx
+ \varepsilon^{r}\,\f_{r}(\bx;\alpha)
+ \cdots
+\varepsilon^{k}\,\f_{k}(\bx;\alpha)
+ \mathcal{O}(\varepsilon^{k+1}).
\end{equation}
Here, $r$ is a natural number satisfying $1 \le r \le k$, $D\subset\mathbb{R}^{2}$ and
$I\subset\mathbb{R}$ are open and bounded sets, and $\varepsilon_{0}>0$ is a small parameter.

We aim to obtain sufficient conditions guaranteeing that the map
\eqref{Identityperturbation} undergoes a Neimark--Sacker bifurcation, thus ensuring the emergence
of a unique normally hyperbolic invariant closed curve for the map \eqref{Identityperturbation}.

In the first assumption, suppose that the guiding system
\begin{equation}\label{firstsys}
\dot{\bx}=\f_{r}(\bx;\alpha)
\end{equation}
has a Hopf point at $(\bx_{\alpha_{0}},\alpha_{0})\in D\times I$. That is,
$\f_{r}$ admits an equilibrium at $(\bx_{\alpha_{0}},\alpha_{0})$ whose eigenvalues are
$\lambda_{1,2}=\pm i\,b_{0}$, with $b_{0}>0$.

By the Implicit Function Theorem, there exists an open interval around $\alpha_{0}$, which we still denote by $I$, and a smooth curve
$\xi: I \to D$ such that $\xi(\alpha_{0})=\bx_{\alpha_{0}}$ and
\[
\f_{r}(\xi(\alpha);\alpha)=0,
\qquad \forall\,\alpha\in I.
\]
Moreover, the eigenvalues $a(\alpha)\pm i\,b(\alpha)$ of the Jacobian matrix
$D_{\bx}\f_{r}(\bx_{\alpha};\alpha)$ satisfy $a(\alpha_{0})=0$ and $b(\alpha_{0})=b_{0}>0$.
Hence, denoting $\bx_{\alpha}=\xi(\alpha)$ for all $\alpha\in I$, the above discussion can be summarized in the following hypothesis.

\begin{itemize}\label{HypothesisH1}
\item[\textbf{H1.}]
There exists a smooth curve
$\xi:I\to D$, $\xi(\alpha)=\bx_{\alpha}$, defined on a open interval $I$ containing $\alpha_{0}$, such that
$\f_{r}(\bx_{\alpha};\alpha)=0$ for all $\alpha\in I$.
Moreover, the eigenvalues $a(\alpha)\pm i\,b(\alpha)$ of
$D_{\bx}\f_{r}(\bx_{\alpha};\alpha)$ satisfy $a(\alpha_{0})=0$ and $b(\alpha_{0})=b_{0}>0$.
\end{itemize}

Hypothesis \textbf{H1}, together with the Implicit Function Theorem, directly provides directly a smooth curve of fixed points of the map \eqref{Identityperturbation}, as stated in the following lemma.

\begin{lemma}\label{lema2.1}
Suppose that hypothesis \textbf{H1} holds. Then, there exist an interval $J\subset I$ containing $\alpha_{0}$,
a constant $\varepsilon_{1}\in(0,\varepsilon_{0})$, and a unique smooth function
\[
\sigma: J \times (-\varepsilon_{1},\varepsilon_{1}) \longrightarrow D
\]
such that
\[
\f(\sigma(\alpha,\varepsilon);\alpha,\varepsilon)=\sigma(\alpha,\varepsilon),
\qquad
\forall\,(\alpha,\varepsilon)\in J\times(-\varepsilon_{1},\varepsilon_{1}).
\]
\end{lemma}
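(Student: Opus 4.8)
The plan is to reformulate the fixed-point equation $\f(\bx;\alpha,\varepsilon)=\bx$ so that the Implicit Function Theorem applies. The immediate difficulty is that the IFT cannot be used directly on $H(\bx;\alpha,\varepsilon):=\f(\bx;\alpha,\varepsilon)-\bx$: because $\f$ is near-identity, $H(\bx;\alpha,0)\equiv 0$ and $D_{\bx}H(\bx;\alpha,0)=0$ is singular for every $(\bx,\alpha)$. To remove this degeneracy I would factor out the leading power $\varepsilon^{r}$ dictated by the expansion~\eqref{Identityperturbation}, introducing
\[
G(\bx;\alpha,\varepsilon):=
\begin{cases}
\varepsilon^{-r}\bigl(\f(\bx;\alpha,\varepsilon)-\bx\bigr), & \varepsilon\neq 0,\\[1mm]
\f_{r}(\bx;\alpha), & \varepsilon=0.
\end{cases}
\]
For $\varepsilon\neq 0$ the zeros of $G$ coincide exactly with the fixed points of $\f$, while at $\varepsilon=0$ one has $G(\bx;\alpha,0)=\f_{r}(\bx;\alpha)$, which is the guiding vector field appearing in \textbf{H1}.

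The technical heart of the argument, and the step I expect to require the most care, is to show that $G$ is of class $C^{1}$ on a neighborhood of $(\bx_{\alpha_{0}};\alpha_{0},0)$. This is precisely where dividing by $\varepsilon^{r}$ could destroy regularity. Since the expansion~\eqref{Identityperturbation} exhibits $H$ as a function whose Taylor coefficients in $\varepsilon$ vanish up to order $r-1$, a generalized Hadamard lemma (Taylor's formula with integral remainder applied in the variable $\varepsilon$, so that $\varepsilon^{-r}H=\int\cdots$ of an $\varepsilon$-derivative) shows that $\varepsilon^{-r}H$ extends across $\varepsilon=0$, with the extension agreeing with $\f_{r}$ there. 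Dividing by $\varepsilon^{r}$ costs $r$ derivatives in $\varepsilon$, so the point to keep track of is that $\f$ is smoother than merely $C^{r}$: since $1\le r\le k$ and the near-identity map at hand is as regular as the underlying flow, $G$ inherits enough derivatives to be $C^{1}$, which is all the IFT needs.

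With this regularity in hand, I would verify the IFT hypotheses for $G$ at $(\bx_{\alpha_{0}};\alpha_{0},0)$. By \textbf{H1}, $G(\bx_{\alpha_{0}};\alpha_{0},0)=\f_{r}(\bx_{\alpha_{0}};\alpha_{0})=0$, and the partial Jacobian is $D_{\bx}G(\bx_{\alpha_{0}};\alpha_{0},0)=D_{\bx}\f_{r}(\bx_{\alpha_{0}};\alpha_{0})$, whose eigenvalues are $\pm i\,b_{0}$ with $b_{0}>0$; being nonzero, this matrix is invertible. The IFT then yields an interval $J\subset I$ containing $\alpha_{0}$, a constant $\varepsilon_{1}\in(0,\varepsilon_{0})$, and a unique $C^{1}$ function $\sigma:J\times(-\varepsilon_{1},\varepsilon_{1})\to D$ with $\sigma(\alpha_{0},0)=\bx_{\alpha_{0}}$ and $G(\sigma(\alpha,\varepsilon);\alpha,\varepsilon)=0$. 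Since $\f(\sigma;\alpha,\varepsilon)-\sigma=\varepsilon^{r}\,G(\sigma;\alpha,\varepsilon)=0$, the curve $\sigma$ consists precisely of fixed points of $\f$, with uniqueness inherited from the IFT. Finally, evaluating at $\varepsilon=0$ and using $\f_{r}(\bx_{\alpha};\alpha)=0$ together with the local uniqueness forces $\sigma(\alpha,0)=\bx_{\alpha}=\xi(\alpha)$ throughout $J$, which ties the fixed-point branch to the Hopf curve of the guiding system and completes the proof.
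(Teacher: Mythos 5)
Your proposal is correct and follows exactly the route the paper intends: the paper gives no written proof, merely noting that \textbf{H1} together with the Implicit Function Theorem yields the fixed-point curve, and the standard way to make that precise is precisely your rescaling $G=\varepsilon^{-r}(\f-\bx)$ with $G(\cdot;\cdot,0)=\f_{r}$ and the invertibility of $D_{\bx}\f_{r}(\bx_{\alpha_{0}};\alpha_{0})$ coming from its nonzero eigenvalues $\pm i\,b_{0}$. Your attention to the regularity of $G$ after dividing by $\varepsilon^{r}$ (which the explicit factored remainder $\varepsilon^{k+1}\bar F$ in the expansion guarantees) is a welcome detail the paper leaves implicit.
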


Our second assumption is a transversality condition.

\begin{itemize}
\item[\textbf{H2.}]
Let $a(\alpha)\pm i\,b(\alpha)$ denote the pair of complex eigenvalues of
$D_{\bx}\f_{r}(\bx_{\alpha};\alpha)$, with $a(\alpha_{0})=0$ and $b(\alpha_{0})=b_{0}>0$.
Assume that
\[
a'(\alpha_{0}) \neq 0.
\]
\end{itemize}

Let $\lambda(\alpha,\varepsilon)$ and $\overline{\lambda(\alpha,\varepsilon)}$ denote the complex
conjugate eigenvalues of the Jacobian matrix
$D_{\bx}\f(\sigma(\alpha,\varepsilon);\alpha,\varepsilon)$, for
$(\alpha,\varepsilon)\in J\times(-\varepsilon_{1},\varepsilon_{1})$.
The following lemma shows that the map \eqref{Identityperturbation}, at the fixed point
$\sigma(\alpha,\varepsilon)$, satisfies Conditions~1 and~2 of
Theorem~\ref{NSBTheorem}.

\begin{lemma}\label{lem2.2}
Assume hypotheses \textbf{H1} and \textbf{H2}. Then there exist
$\varepsilon_{2}\in(0,\varepsilon_{1})$ and a unique $C^{k}$ curve
\[
\beta:(-\varepsilon_{2},\varepsilon_{2})\longrightarrow J,
\qquad \beta(0)=\alpha_{0},
\]
such that, for all $\varepsilon\in(-\varepsilon_{2},\varepsilon_{2})\setminus\{0\}$, the following conditions hold:
\begin{itemize}
\item[1.] $|\lambda(\beta(\varepsilon),\varepsilon)|=1$;
\item[2.] $(\lambda(\beta(\varepsilon),\varepsilon))^{k}\neq 1$ for $k=1,2,3,4$;
\item[3.] $\displaystyle
\frac{d}{d\alpha}|\lambda(\alpha,\varepsilon)|
\bigg|_{\alpha=\beta(\varepsilon)}\neq 0$.
\end{itemize}
\end{lemma}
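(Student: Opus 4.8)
The plan is to reduce all three assertions to one application of the Implicit Function Theorem, phrasing Conditions~1 and~3 through the determinant and trace of the Jacobian at the fixed point so as to sidestep the fact that the two eigenvalues collide at $\varepsilon=0$. First I would record that, by Lemma~\ref{lema2.1}, $\sigma(\alpha,0)=\bx_\alpha$, so differentiating \eqref{Identityperturbation} in $\bx$ at the fixed point gives
\[
D_\bx\f(\sigma(\alpha,\varepsilon);\alpha,\varepsilon)=\mathrm{Id}+\varepsilon^{r}M(\alpha,\varepsilon),\qquad M(\alpha,0)=D_\bx\f_{r}(\bx_\alpha;\alpha),
\]
with $M$ inheriting the regularity of the family. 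The key structural identity is the purely algebraic expansion, valid for a $2\times2$ matrix,
\[
\det\bigl(\mathrm{Id}+\varepsilon^{r}M\bigr)=1+\varepsilon^{r}\,\mathrm{tr}\,M+\varepsilon^{2r}\det M=:1+\varepsilon^{r}G(\alpha,\varepsilon),
\]
so that $G(\alpha,\varepsilon)=\mathrm{tr}\,M(\alpha,\varepsilon)+\varepsilon^{r}\det M(\alpha,\varepsilon)$ is smooth (no division by $\varepsilon^{r}$ is performed) with $G(\alpha,0)=\mathrm{tr}\,D_\bx\f_{r}(\bx_\alpha;\alpha)=2a(\alpha)$.

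Since $M(\alpha_0,0)=D_\bx\f_r(\bx_{\alpha_0};\alpha_0)$ has the simple non-real eigenvalues $\pm i b_0$, continuity guarantees that, at the fixed point, $\mathrm{Id}+\varepsilon^{r}M$ possesses a genuine complex-conjugate pair $\lambda,\overline\lambda$ for $(\alpha,\varepsilon)$ near $(\alpha_0,0)$ with $\varepsilon\neq0$, and then $|\lambda|^2=\det(\mathrm{Id}+\varepsilon^{r}M)=1+\varepsilon^{r}G$ and $2\,\mathrm{Re}\,\lambda=2+\varepsilon^{r}\mathrm{tr}\,M$. The heart of the proof is the Implicit Function Theorem applied to $G$: by \textbf{H1}, $G(\alpha_0,0)=2a(\alpha_0)=0$, and by \textbf{H2}, $\partial_\alpha G(\alpha_0,0)=2a'(\alpha_0)\neq0$, yielding $\varepsilon_2\in(0,\varepsilon_1)$ and a unique curve $\beta$ of the asserted regularity with $\beta(0)=\alpha_0$ and $G(\beta(\varepsilon),\varepsilon)\equiv0$. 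Along $\beta$ we get $|\lambda(\beta(\varepsilon),\varepsilon)|^2=1+\varepsilon^{r}G=1$, which is Condition~1; and since $\partial_\alpha|\lambda|^2=\varepsilon^{r}\partial_\alpha G$ with $\partial_\alpha G(\beta(\varepsilon),\varepsilon)\to2a'(\alpha_0)\neq0$, shrinking $\varepsilon_2$ makes $\partial_\alpha|\lambda|=\tfrac{\varepsilon^{r}}{2}\partial_\alpha G$ nonzero for $\varepsilon\neq0$, which is Condition~3.

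For Condition~2 I would use the trace along $\beta$. The relation $G(\beta(\varepsilon),\varepsilon)=0$ reads $\mathrm{tr}\,M=-\varepsilon^{r}\det M$, and since $\det M(\beta(\varepsilon),\varepsilon)\to\det D_\bx\f_r(\bx_{\alpha_0};\alpha_0)=b_0^2>0$, we obtain $\mathrm{Re}\,\lambda=1+\tfrac{\varepsilon^{r}}{2}\mathrm{tr}\,M=1-\tfrac{\varepsilon^{2r}}{2}\det M<1$ for $\varepsilon\neq0$ small. Writing $\lambda=e^{\pm i\varphi(\varepsilon)}$ (legitimate as $|\lambda|=1$), this gives $\cos\varphi(\varepsilon)<1$, so $\lambda\neq1$; and $\cos\varphi(\varepsilon)\to1$ forces $\lambda(\beta(\varepsilon),\varepsilon)\to1$. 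Hence, after shrinking $\varepsilon_2$, $\lambda$ lies in a neighborhood of $1$ that excludes the other roots of unity of order at most four, namely $-1,\pm i,e^{\pm2\pi i/3}$, while $\lambda\neq1$; therefore $\lambda^{j}\neq1$ for $j=1,2,3,4$, establishing Condition~2.

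The step I expect to demand the most care is the regularity bookkeeping near $\varepsilon=0$. Because the two eigenvalues of $D_\bx\f$ merge at $1$ when $\varepsilon=0$, the branch $\lambda$ is not smooth across $\varepsilon=0$, so all the conditions must be read off from the manifestly smooth quantities $\det$ and $\mathrm{tr}$; the genuine non-reality of the pair for $\varepsilon\neq0$ is recovered, independently of Condition~1, from the continuity of the simple eigenvalues $\pm i b_0$ of $M(\alpha_0,0)$. The one remaining technical point is to confirm that $M$, and therefore $G$, truly inherits enough derivatives after the factor $\varepsilon^{r}$ has been extracted, so that the curve $\beta$ produced by the Implicit Function Theorem has the regularity claimed in the statement.
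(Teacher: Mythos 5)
Your proof is correct, and its skeleton is the same as the paper's: both arguments hinge on applying the Implicit Function Theorem to the smooth function obtained by factoring $\varepsilon^{r}$ out of $|\lambda|^{2}-1$ (your $G$ is exactly the paper's $\rho$), with the nondegenerate derivative $2a'(\alpha_{0})$ supplied by \textbf{H2}. The differences are in the packaging. The paper expands the eigenvalues themselves, $\lambda(\alpha,\varepsilon)=1+\varepsilon^{r}(a(\alpha)+ib(\alpha))+\varepsilon^{r+1}(c(\alpha)+id(\alpha))+\mathcal{O}(\varepsilon^{r+2})$, and reads $\rho$ off from $|\lambda|^{2}$; you instead use the $2\times2$ identity $\det(\mathrm{Id}+\varepsilon^{r}M)=1+\varepsilon^{r}\mathrm{tr}\,M+\varepsilon^{2r}\det M$, which yields the smoothness of $G$ without touching eigenvalue branches. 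This is a clean variant, though your stated motivation for it is a non-issue: since $M(\alpha_{0},0)=D_{\bx}\f_{r}(\bx_{\alpha_{0}};\alpha_{0})$ has \emph{simple} eigenvalues $\pm ib_{0}$, the eigenvalues of $M$, and hence the branch $\lambda=1+\varepsilon^{r}\mu$, are in fact smooth across $\varepsilon=0$; the collision of the two eigenvalues of $D_{\bx}\f$ at $1$ does not destroy smoothness of each branch. The second genuine difference is Condition~2: the paper rules out $\lambda=1$ via the leading imaginary part $\varepsilon^{r}b_{0}\neq0$, whereas you rule it out via $\mathrm{Re}\,\lambda=1-\tfrac{\varepsilon^{2r}}{2}\det M<1$, using $\det M\to b_{0}^{2}>0$ along $\beta$; both are valid (including for $\varepsilon<0$, since $\varepsilon^{2r}>0$), and both then exclude the remaining roots of unity by proximity of $\lambda$ to $1$. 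The paper's route has the side benefit of producing the explicit expansion of $\lambda(\beta(\varepsilon),\varepsilon)$, which is reused later to define $\theta_{\varepsilon}$ and the coefficients $a_{j},b_{j}$. Finally, the regularity caveat you flag (whether $M$, hence $G$, retains $C^{k}$ smoothness after the factor $\varepsilon^{r}$ is extracted from a $C^{k}$ family) is a real bookkeeping point, but it affects the paper's $\rho$ in exactly the same way, so it is not a defect of your argument relative to theirs.
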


\begin{proof}
According to Lemma \ref{lema2.1}, there exists a function $\sigma:J\times(-\e_{1},\e_{1})\longrightarrow D$ satisfying $\f(\sigma(\alpha,\e),\alpha,\e)=\sigma(\alpha,\e)$ is guaranteed for all $(\alpha,\e)\in J\times(-\e_{1},\e_{1})$. Also, 
\begin{equation*}
D_{\x}\,\f(\sigma(\alpha,\e);\alpha,\e)= \text{Id} + \e^{r}\,\parde[\f_{r}]{\x}(\x_{\alpha},\alpha)  +\e^{r+1}\,\parde[\f_{r+1}]{\x}(\x_{\alpha},\alpha) +\mathcal{O}(\e^{r+2}),     
\end{equation*}
has the following eigenvalues 
\begin{align*}
\lambda(\alpha,\e)&=1+\e^{r}\,(a(\alpha)+i\,b(\alpha))+\e^{r+1}(c(\alpha)+i\,d(\alpha))+\mathcal{O}(\e^{r+2}),\\
\overline{\lambda(\alpha,\e)}&=1+\e^{r}\,(a(\alpha)-i\,b(\alpha))+\e^{r+1}(c(\alpha)-i\,d(\alpha))+\mathcal{O}(\e^{r+2}).
\end{align*}
Observe 
\begin{align}\label{lambdasqr}
|\lambda(\alpha,\e)|^{2}&=1+\e^{r}\,2a(\alpha)+ \e^{r+1}(a(\alpha)^{2}+b(\alpha)^{2}+2c(\alpha)) +\mathcal{O}(\e^{r+2})\\ \notag
&=1+\e^{r}\,\rho(\alpha,\e)
\end{align}
where $\rho(\alpha,\e)=2a(\alpha)+ \e(a(\alpha)^{2}+b(\alpha)^{2}+2c(\alpha))+\mathcal{O}(\e^{2})$. Moreover, hypothesis \textbf{H2} implies
$$ \rho(\alpha_{0},0)=0 \qquad \text{and} \qquad \frac{\partial \rho}{\partial\alpha}(\alpha_{0},0)=2a'(\alpha_{0}) \neq 0.$$

Hence, applying the Implicit Function Theorem, a unique function $\beta:(-\e_{2},\e_{2})\longrightarrow J$ of class $C^{k}$ is obtained, with $\e_{2} \in (0, \e_{1})$, such that $\beta(0)=\alpha_{0}$ and $\rho(\beta(\e),\e)=0$ throughout the interval $\e\in(-\e_{2},\e_{2})$. Thus, $|\lambda(\beta(\e),\e)|=1$ for all $\e\in(-\e_{2},\e_{2})$, implying that Condition 1 holds.

In additional, 
\begin{align*}
\lambda(\beta(\e),\e)&=1+\e^{r}\,\big(a(\beta(\e))+i\,b(\beta(\e))\big)+\e^{r+1}\big(c(\beta(\e))+i\,d(\beta(\e))\big)+\mathcal{O}(\e^{r+2})\\
    &= 1+\e^{r}\,(i\,b_{0})+\e^{r+1}\big(c(\alpha_{0})+i\,d(\alpha_{0})+a'(\alpha_{0})\beta'(0)+i\,b'(\alpha_{0})\beta'(0)\big)+\mathcal{O}(\e^{r+2})
\end{align*}
with $b_{0}>0$ and for all $\e\in(-\e_{2},\e_{2})$. Thus, since $\e_{2}>0$ can be sufficiently small, it follows
$$ \lambda(\beta(\e),\e)\neq 1, \, \,\lambda(\beta(\e),\e)\neq\pm i, \,\,\text{and}\,\,\lambda(\beta(\e),\e)\neq -\frac{1}{2}\pm i\frac{\sqrt{3}}{2}, $$
for all $\e\in(-\e_{2},\e_{2})\symbol{92} \lbrace{0\rbrace}$. Therefore, $$(\lambda(\beta(\e),\e))\neq 1, \, \,(\lambda(\beta(\e),\e))^{2}\neq 1\,,\,(\lambda(\beta(\e),\e))^{3}\neq 1,\,\,\text{and}\,\, (\lambda(\beta(\e),\e))^{4}\neq 1,$$ this proves Condition 2.

Finally, implicitly deriving \eqref{lambdasqr} at $\alpha=\alpha_{0}$, we get
$$ \frac{\partial}{\partial\alpha}|\lambda(\alpha,\e)|\bigg|_{\alpha=\beta(\e)}= \e^{r}\,a'(\alpha_{0})+\e ^{r+1} ( b(\alpha_{0} ) b'(\alpha_{0} )+ c'(\alpha_{0} ) )+\mathcal{O}(\e^{r+2}). $$

From hypothesis, $a'(\alpha_{0})\neq 0$. Furthermore, we can take $\e_{2}>0$ sufficiently small, if necessary, so that
$$\frac{d}{d\alpha}|\lambda(\alpha,\e)|\bigg|_{\alpha=\beta(\e)}\neq 0,$$
for all $\e\in(-\e_{2},\e_{2})\symbol{92} \lbrace{0\rbrace}$, proving Condition 3.
\end{proof}
\begin{remark}
Notice that hypothesis \textbf{H2} can, in fact, be relaxed. A typical situation occurs when
$ a(\alpha) \equiv 0 $. In this case, it is still possible to obtain a branch of solutions to
$ \rho(\alpha,\e)=0 $ by taking into account higher-order terms of the map~\eqref{Identityperturbation}.
More precisely, if
\[
b(\alpha_0)^2 + 2c(\alpha_0) = 0 \quad \text{and} \quad
b(\alpha_0)\,b'(\alpha_0) + c'(\alpha_0) \neq 0,
\]
then the expansion of $ \rho(\alpha,\e) $ given in~\eqref{lambdasqr} reduces to
\[
\rho(\alpha,\e) = \e\big(b(\alpha)^2 + 2c(\alpha)\big) + \mathcal{O}(\e^2),
\]
which allows the Implicit Function Theorem to be applied once again. As a consequence,
one obtains a $ C^k $ function $ \beta(\e) $ such that
\[
|\lambda(\beta(\e),\e)| = 1 \quad \text{for all } \e \in (-\e_2,\e_2).
\]
This observation highlights a systematic strategy for exploiting higher-order terms when
hypothesis \textbf{H2} fails, thereby opening the door to the analysis of more degenerate
situations.
 \end{remark}

Applying the change of variable $\bx=\by+\sigma(\alpha,\varepsilon)$ and parameter  $\alpha=\tau+\beta(\varepsilon)$ to the map \eqref{Identityperturbation}, we obtain
\begin{equation}\label{Identityperchange}
\begin{aligned}
\by \mapsto \boldsymbol{\mathcal{H}}_{\varepsilon}(\by,\tau)
&=\f\!\left(\by+\sigma(\tau+\beta(\varepsilon),\varepsilon);\,
\tau+\beta(\varepsilon),\varepsilon\right)
-\sigma\!\left(\tau+\beta(\varepsilon),\varepsilon\right)\\
&=\by+\varepsilon^{r}\,
\boldsymbol{F}\!\left(\by+\sigma(\tau+\beta(\varepsilon),\varepsilon);\,
\tau+\beta(\varepsilon),\varepsilon\right),
\end{aligned}
\end{equation}
where
\[
\boldsymbol{F}(\bx;\alpha,\varepsilon)
=\f_{r}(\bx;\alpha)
+\varepsilon\,\f_{r+1}(\bx;\alpha)
+\cdots
+\varepsilon^{k}\,\f_{k}(\bx;\alpha)
+\varepsilon^{k-r+1}\,\widetilde{\boldsymbol{F}}(\bx;\alpha,\varepsilon).
\]
By construction, $\by=0$ is a fixed point of the map \eqref{Identityperchange}, and $\tau=0$ corresponds to the critical value of the parameter.

Expanding $\boldsymbol{\mathcal{H}}_{\varepsilon}(\by,0)$ in a neighborhood of $\by=0$, we obtain
$$ \boldsymbol{\mathcal{H}}_{\e}(\y,0)= \A_{\e}\y + \frac{1}{2}\,\B_{\e}(\y,\y)+\frac{1}{6}\,\mathcal{C}_{\e}(\y,\y,\y)+\mathcal{O}(||\y||^{4}),  $$
where $\A_{\e}=D_{\x}\mathcal{H}_{\e}(0,0)$. Moreover,
\begin{equation}\label{multilinearfunctions}
\A_{\e}=\text{Id}+\e^{r}\,A_{\e}+ \mathcal{O}(\e^{k+1}), \quad \B_{\e}=\e^{r}\,B_{\e}+ \mathcal{O}(\e^{k+1}), \quad \mathcal{C}_{\e}=\e^{r}\,C_{\e}+ \mathcal{O}(\e^{k+1}),
\end{equation}
with
\begin{equation}\label{multilinearfunctions2}
\begin{aligned}
A_{\e}&= A_{r}+\e\,A_{r+1}+\cdots +\e^{k-r}\,A_{k},\\
B_{\e}(\bu,\bv)&= B_{r}(\bu,\bv)+\e\,B_{r+1}(\bu,\bv)+\cdots +\e^{k-r}\,B_{k}(\bu,\bv),\\
C_{\e}(\bu,\bv,\w)&= C_{r}(\bu,\bv,\w)+\e\,C_{r+1}(\bu,\bv,\w)+\cdots +\e^{k-r}\,C_{k}(\bu,\bv,\w).
\end{aligned}
\end{equation}

Our final hypothesis is a technical assumption that allows us to standardize the choice of the eigenvectors $\mathbf{p}$ and $\mathbf{q}$ in \eqref{firstLyapunovcoefficient} as $\q=\bp=(1,-i)/\sqrt{2}$, thereby simplifying the computations. Notice that this assumption is not restrictive and can always be enforced by a linear change of coordinates.

\begin{itemize}
\item[\textbf{H3.}]
Let $\widetilde a_{\varepsilon}\pm i\,\widetilde b_{\varepsilon}$ denote the eigenvalues of $\e^r A_{\varepsilon}$. Assume that
\[
\mathrm{Id}+\varepsilon^{r}A_{\varepsilon}
=
\begin{pmatrix}
1+\widetilde a_{\varepsilon} & -\widetilde b_{\varepsilon}\\
\widetilde b_{\varepsilon} & 1+\widetilde a_{\varepsilon}
\end{pmatrix}.
\]
\end{itemize}

We now proceed to verify Conditions 1--3 of Theorem~\ref{NSBTheorem} for the map~\eqref{Identityperchange}.

Observe that
\[
\kappa_{\varepsilon}(\tau)
=\lambda(\tau+\beta(\varepsilon),\varepsilon),
\qquad
\overline{\kappa}_{\varepsilon}(\tau)
=\overline{\lambda(\tau+\beta(\varepsilon),\varepsilon)}
\]
are the eigenvalues of $D_{\by}\boldsymbol{\mathcal{H}}_{\varepsilon}(0,\tau)$. Writing
\[
\kappa_{\varepsilon}(\tau)
=r_{\varepsilon}(\tau)\,e^{i\varphi_{\varepsilon}(\tau)},
\qquad
\varphi_{\varepsilon}(0)=\theta_{\varepsilon}\in(0,\pi),
\]
we have
\[
\lambda(\beta(\varepsilon),\varepsilon)
=\kappa_{\varepsilon}(0)
=r_{\varepsilon}(0)\,e^{i\theta_{\varepsilon}}.
\]
From Lemma~\ref{lem2.2}, it follows that
\[
|\lambda(\beta(\varepsilon),\varepsilon)|=1,
\qquad
\forall\,\varepsilon\in(0,\varepsilon_2),
\]
and, therefore,
\begin{equation}\label{conditionscheck1}
r_{\varepsilon}(0)=1,
\qquad
\forall\,\varepsilon\in(0,\varepsilon_2).
\end{equation}
Moreover, since
\[
(\lambda(\beta(\varepsilon),\varepsilon))^{k}
=e^{ik\theta_{\varepsilon}},
\]
Lemma~\ref{lem2.2} implies
\begin{equation}\label{conditionscheck2}
e^{ik\theta_{\varepsilon}}\neq 1,
\qquad
\forall\,\varepsilon\in(0,\varepsilon_2),
\quad k=1,2,3,4.
\end{equation}
Analogously, using
\[
\frac{d}{d\tau}\kappa_{\varepsilon}(\tau)\bigg|_{\tau=0}
=
\frac{d}{d\alpha}\lambda(\alpha,\varepsilon)
\bigg|_{\alpha=\beta(\varepsilon)},
\]
and again Lemma~\ref{lem2.2}, we obtain
\begin{equation}\label{conditionscheck3}
\frac{d}{d\tau}r_{\varepsilon}(\tau)\bigg|_{\tau=0}\neq 0,
\qquad
\forall\,\varepsilon\in(0,\varepsilon_2).
\end{equation}
Equations \eqref{conditionscheck1}--\eqref{conditionscheck3} show that the map
\eqref{Identityperchange} satisfies Conditions~1 and~2 of Theorem~\ref{NSBTheorem}.

Now, expanding the eigenvalues around $\e=0$, we obtain
\begin{equation}\label{expreeigenIdentity}
e^{i\theta_{\varepsilon}}
=
1+\sum_{j=r}^{k}\varepsilon^{j}(a_{j}+i\,b_{j})
+\mathcal{O}(\varepsilon^{k+1}).
\end{equation}
From hypothesis \textbf{H1}, we have $a_{r}=0$ and $b_{r}=b_{0}>0$ and, therefore,
\[
\widetilde a_{\varepsilon}
=\sum_{j=r+1}^{k}\varepsilon^{j}a_{j},
\qquad
\widetilde b_{\varepsilon}
=\sum_{j=r}^{k}\varepsilon^{j}b_{j}.
\]
By Hypothesis {\bf H3},  we may take
$\q=\bp=(1,-i)/\sqrt{2}$ in \eqref{firstLyapunovcoefficient}, yielding
\begin{equation}\label{lyapuforIdentity}
\begin{aligned}
{\ell_{1}^{\e}}= \textrm{Re}\left(\frac{e^{-i\theta_{\e}}\big<\mathbf{q},\e^{r}\,C_{\e}(\mathbf{q},\mathbf{q},\bar{\mathbf{q}})\big>}{2} \right) -& \textrm{Re}\left(\frac{(1-2\,e^{i\theta})\,e^{-2i\theta}}{2(1-e^{i\theta})}\,\big<\mathbf{q},\e^{r}\,B_{\e}(\mathbf{q},\mathbf{q})\big>\big<\mathbf{q},\e^{r}\,B_{\e}(\mathbf{q},\bar{\mathbf{q}})\big>\right)\notag\\ 
-&\frac{1}{2}\big|\big<\mathbf{q},\e^{r}\,B_{\e}(\mathbf{q},\bar{\mathbf{q}})\big>\big|^{2} -\frac{1}{4}\big|\big<\mathbf{q},\e^{r}\,B_{\e}(\bar{\mathbf{q}},\bar{\mathbf{q}})\big>\big|^{2}.
\end{aligned}
\end{equation} 
Thus, the Taylor expansion of $\ell_{1}^{\varepsilon}$ around $\varepsilon=0$ reads
\begin{equation}\label{l1forIdentity}
\ell_{1}^{\varepsilon}
=\varepsilon^{r}\ell_{1,r}
+\varepsilon^{r+1}\ell_{1,r+1}
+\cdots
+\varepsilon^{k}\ell_{1,k}
+\mathcal{O}(\varepsilon^{k+1}),
\end{equation}
where $\varepsilon^{j}\ell_{1,j}$, $r\le j\le k$, is called the $j$th-order Lyapunov coefficient.

The reader is referred to the Appendix, where the expansion of $\ell_{1}^{\varepsilon}$ is computed explicitly up to second order in the case $k = 2$ and $r = 1$.

Now, we can state the first main result of this paper:

\begin{mtheorem}\label{TheoremA}
Assume that hypotheses \textbf{H1}, \textbf{H2}, and \textbf{H3} hold, and that $\ell_{1,j}\neq 0$ for some $j\in\{r,\ldots,k\}$. Let $s\in\{r,\ldots,k\}$
be the smallest index such that $\ell_{1,s}\neq 0$. Then, for every $\e>0$ sufficiently small, there exist a $C^{1}$ curve $\beta(\e)\in J$, with $\beta(0)=\alpha_{0}$, and open neighborhoods $U_{\e}\subset \R^{2}$ of the fixed point $\x=\sigma(\alpha,\e)$ and $J_{\e}\subset J$ of $\beta(\e)$ satisfying 
\begin{itemize}
\item[\textbf{{1.}}] When $\alpha\in J_{\e}$ and $\ell_{1,s}<0$, we have the following possibilities: if $(\alpha-\beta(\e))\,a'(\alpha_{0})\leq 0$, then $\x=\sigma(\alpha,\e)$ is attracting; if $(\alpha-\beta(\e))\,a'(\alpha_{0})>0$, then $\x=\sigma(\alpha,\e)$ is repelling and there exists a unique attracting normally hyperbolic closed invariant curve $\Gamma_{\alpha,\e}$ in $U_{\e}$.
\item[\textbf{{2.}}] When $\alpha\in J_{\e}$ and $\ell_{1,s}>0$, we have the following possibilities: if $(\alpha-\beta(\e))\,a'(\alpha_{0})\geq 0$, then $\x=\sigma(\alpha,\e)$ is repelling; if $(\alpha-\beta(\e))\,a'(\alpha_{0})<0$, then $\x=\sigma(\alpha,\e)$ is attracting and there exists a unique repelling normally hyperbolic closed invariant curve $\Gamma_{\alpha,\e}$ in $U_{\e}$.
 \end{itemize}
\end{mtheorem}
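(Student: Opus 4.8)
The plan is to apply the Neimark--Sacker bifurcation theorem (Theorem~\ref{NSBTheorem}) to the transformed map $\by\mapsto\boldsymbol{\mathcal{H}}_{\e}(\by,\tau)$ of~\eqref{Identityperchange}, treating $\tau$ as the bifurcation parameter while keeping $\e>0$ fixed and small. The preparatory lemmas and computations already carry most of the weight: Lemma~\ref{lema2.1} supplies the branch of fixed points $\sigma(\alpha,\e)$, the change of variables $\bx=\by+\sigma(\alpha,\e)$, $\alpha=\tau+\beta(\e)$ puts the fixed point at $\by=0$ and the critical parameter at $\tau=0$, and equations~\eqref{conditionscheck1}--\eqref{conditionscheck3} verify the eigenvalue normalization $r_{\e}(0)=1$ together with Conditions~1 and~2 of Theorem~\ref{NSBTheorem}. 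So the first thing I would record is that the only hypothesis of Theorem~\ref{NSBTheorem} not yet checked is the non-degeneracy condition $\ell_{1}\neq 0$.

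To dispatch that condition I would invoke the expansion~\eqref{l1forIdentity}. Since $s$ is by definition the smallest index with $\ell_{1,s}\neq 0$, the lower-order coefficients vanish and $\ell_{1}^{\e}=\e^{s}\ell_{1,s}+\mathcal{O}(\e^{s+1})$; hence for all sufficiently small $\e>0$ one has $\ell_{1}^{\e}\neq 0$ with $\operatorname{sign}(\ell_{1}^{\e})=\operatorname{sign}(\ell_{1,s})$. This simultaneously verifies Condition~3 and fixes the sign of the first Lyapunov coefficient, which is exactly what distinguishes the two cases of the conclusion.

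Next I would apply Theorem~\ref{NSBTheorem} to obtain, for each small $\e>0$, neighborhoods of $\by=0$ and of $\tau=0$ together with the stated attracting/repelling dichotomy, and then pull everything back through the (parameter-dependent) diffeomorphism $\by\mapsto\by+\sigma(\alpha,\e)$. The care here is purely in the sign bookkeeping: the parameter $\tau$ is $\alpha-\beta(\e)$, and the derivative $r'(0)$ of Theorem~\ref{NSBTheorem} is $\frac{d}{d\alpha}|\lambda(\alpha,\e)|\big|_{\alpha=\beta(\e)}$, which from the expansion obtained in the proof of Lemma~\ref{lem2.2} equals $\e^{r}a'(\alpha_{0})+\mathcal{O}(\e^{r+1})$ and therefore has the sign of $a'(\alpha_{0})$ for small $\e$. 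Thus the product $\tau\,r'(0)$ inherits the sign of $(\alpha-\beta(\e))\,a'(\alpha_{0})$, and substituting this together with $\operatorname{sign}(\ell_{1}^{\e})=\operatorname{sign}(\ell_{1,s})$ into the dichotomy of Theorem~\ref{NSBTheorem} should reproduce items~\textbf{1} and~\textbf{2} verbatim. The invariant curve $\gamma_{\tau}$ provided by Theorem~\ref{NSBTheorem} transports to $\Gamma_{\alpha,\e}$ under the translation, with stability preserved, and its normal hyperbolicity is supplied by Remark~\ref{rem:normhypcurve}.

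I expect the main obstacle to be less a conceptual one than precisely this sign-tracking across the two changes (of variable and of parameter), compounded by the need to choose a single threshold on $\e$ small enough to satisfy all of the finitely many smallness requirements at once, so that $\ell_{1}^{\e}$, $r'(0)$, and the non-resonance conditions retain their leading-order signs simultaneously. A secondary point worth stating carefully is that the neighborhoods $U_{\e}$, $J_{\e}$, and hence the curve $\Gamma_{\alpha,\e}$, genuinely depend on $\e$, so the conclusion is a family of bifurcations indexed by $\e$ rather than a single bifurcation in the two parameters jointly.
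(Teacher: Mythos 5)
Your proposal is correct and follows essentially the same route as the paper: Conditions 1 and 2 of Theorem~\ref{NSBTheorem} come from \eqref{conditionscheck1}--\eqref{conditionscheck3}, Condition 3 from the leading term $\e^{s}\ell_{1,s}$ of \eqref{l1forIdentity}, and the conclusion is pulled back through the translation and parameter shift with Remark~\ref{rem:normhypcurve} supplying normal hyperbolicity. The only step the paper spells out that you take for granted by citing \eqref{l1forIdentity} is the perturbation argument for the eigenvectors $\mathbf{p}_{\e},\mathbf{q}_{\e}$ of $\A_{\e}$ (using \textbf{H3} to reduce them to $(1,-i)/\sqrt{2}$ up to higher order), which justifies that the true first Lyapunov coefficient of $\boldsymbol{\mathcal{H}}_{\e}$ equals $\ell_{1}^{\e}+\mathcal{O}(\e^{k+1})$.
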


\begin{proof}
Notice that for proving this theorem it is sufficient to show that map \eqref{Identityperchange} satisfies the three conditions of Theorem \ref{NSBTheorem}. However, according to equations \eqref{conditionscheck1}, \eqref{conditionscheck2} and \eqref{conditionscheck3}, map \eqref{Identityperchange} already meets Conditions 1 and 2 of Theorem \ref{NSBTheorem}. 

Now, to proof Condition 3, consider $\mathbf{p}_{\e}\in \C^{2}$ the eigenvector of the matrix $\A_{\e}^{T}$ with respect to the eigenvalue $e^{-i\theta_{\e}}$ and $\mathbf{q}_{\e}\in\C^{2}$ the eigenvector of the matrix  $\A_{\e}$ with respect to the eigenvalue $e^{i\theta_{\e}}$ such that $\langle\mathbf{p}_{\e}, \mathbf{q}_{\e}\rangle=1$. Furthermore, we claim that $\mathbf{p}_{\e}=\widetilde{\mathbf{p}}_{\e}+\mathcal{O}(\e^{k-r+1})$ and $\mathbf{q}_{\e}=\widetilde{\mathbf{q}}_{\e}+\mathcal{O}(\e^{k-r+1})$, where $\widetilde{\mathbf{p}}_{\e}, \widetilde{\mathbf{q}}_{\e}\in \C^{2}$ satisfy 
$$ A_{\e}^{T}\widetilde{\mathbf{p}}_{\e}=(\widetilde{a}_{\e}-i\widetilde{b}_{\e})\widetilde{\mathbf{p}}_{\e},\quad A_{\e}\widetilde{\mathbf{q}}_{\e}=(\widetilde{a}_{\e}+i\widetilde{b}_{\e})\widetilde{\mathbf{q}}_{\e}, \,\,\text{and}\,\, \langle\widetilde{\mathbf{p}}_{\e},\widetilde{\mathbf{q}}_{\e}\rangle=1.$$
Indeed, if $\bv=\widetilde{\bv}+\mathcal{O}(\e^{k-r+1})\in\C^{2}$ is an eigenvector of $\A_{\e}$ related to $\kappa_{\e}(0)$, then
$$ [\A_{\e}-\kappa_{\e}(0)\,\text{Id}]\bv = 0.$$

Using equation \eqref{expreeigenIdentity}, the above equality writes 
$$ [\text{Id}+\e^{r}\,A_{\e}+ \mathcal{O}(\e^{k+1})-(1+\widetilde{a}_{\e}+i\,\widetilde{b}_{\e}+\mathcal{O}(\e^{k+1}))\,\text{Id}]\bv = 0.$$

From {\bf H3}, we get
$$\begin{pmatrix}
1+\widetilde{a}_{\e} & -\widetilde{b}_{\e} \\
\widetilde{b}_{\e} & 1+\widetilde{a}_{\e}
\end{pmatrix}\begin{pmatrix}
\frac{1}{\sqrt{2}}  \\
\frac{-i}{\sqrt{2}}
\end{pmatrix}=\frac{1}{\sqrt{2}}\begin{pmatrix}
1+\widetilde{a}_{\e}+i\,\widetilde{b}_{\e}  \\
\widetilde{b}_{\e}-i(1+\widetilde{a}_{\e}) 
\end{pmatrix}=(1+\widetilde{a}_{\e}+i\,\widetilde{b}_{\e} )\begin{pmatrix}
\frac{1}{\sqrt{2}}  \\
\frac{-i}{\sqrt{2}}
\end{pmatrix}\cdot$$

Thus, taking $\widetilde{\bv}=(1/\sqrt{2},-i/\sqrt{2})$, it follows that $\widetilde{\bv}$ is an eigenvector of $A_{\e}$ with respect to the eigenvalue $\widetilde{a}_{\e}+i\,\widetilde{b}_{\e}$. In a similar way, we can show the same for the matrix $A_{\e}^{T}$. 

Therefore, we can take $\widetilde{\mathbf{p}}_{\e}= \widetilde{\mathbf{q}}_{\e}=\q$, so from equation \eqref{expressionsg_ij's}, we have
\begin{align*}
 g_{11}&=\left<\mathbf{q},\B_{\e}(\mathbf q,\mathbf {\bar{q}})\right>=\left<\mathbf q, \e^{r}\,B_{\e}(\mathbf q,\mathbf {\bar{q}})\right>+\mathcal{O}(\e^{k+1}),\\
 g_{20}&=\left<\mathbf{q},\B_{\e}(\mathbf q,\mathbf {q})\right>=\left<\mathbf q, \e^{r}\,B_{\e}(\mathbf q,\mathbf {q})\right>+\mathcal{O}(\e^{k+1}),\\
 g_{02}&=\left<\mathbf q,\B_{\e}(\mathbf {\bar{q}},\mathbf {\bar{q}})\right>=\left<\mathbf q, \e^{r}\,B_{\e}(\mathbf {\bar{q}},\mathbf {\bar{q}})\right>+\mathcal{O}(\e^{k+1})\\
g_{21}&=\left<\mathbf q, \mathcal{C}_{\e}(\mathbf q,\mathbf q, \mathbf {\bar{q}})\right>= \left<\mathbf q, \e^{r}\,C_{\e}(\mathbf q,\mathbf q, \mathbf {\bar{q}})\right> +\mathcal{O}(\e^{k+1}). 
\end{align*}

Conversely, from hypothesis, we denote by $s$ with $r\leq s\leq k$ the first subindex such that $\ell_{1,s}^{\e}\neq 0$. Thus, substituting the above expressions into the formula of the first Lyapunov coefficient \eqref{firstLyapunovcoefficient}, we get
\begin{align*}
\ell_{1}&=\ell_{1}^{\e} + \mathcal{O}(\e^{k+1})\\
&=\e^{s}\,\ell_{1,s}+\e^{s+1}\,\ell_{1,s+1}+\cdots+\e^{k-r}\,\ell_{1,k}+\mathcal{O}(\e^{k+1}).
\end{align*}
Moreover, since $\ell_{1,s}^{\e}\neq 0$, there exists $\e^{*}\in(0,\e_{2})$, satisfying
$$\text{signal}\,(\ell_{1}(\e))=\text{signal}\,(\ell_{1,s}),$$ for all $\e\in(0,\e^{*})$. 

Hence, Theorem \ref{NSBTheorem} and Remark \ref{rem:normhypcurve} can be applied to map \eqref{Identityperchange}, for all $\e\in(0,\e^{*})$, thereby ensuring the existence of open sets $V_{\e}\subset \R^{2}$ around $\y=0$ and $\widetilde{I}_{\e}\subset I_{\e}$ surrounding $\tau=0$ such that statements 1 and 2 of Theorem \ref{NSBTheorem} are satisfied. 
Hence, returning to the change of coordinates in equation \eqref{Identityperchange}, we can find an open set $U_{\e}\subset D$ around $\x=\sigma(\beta(\e),\e)$ and an open interval $J_{\e}$ around $\beta(\e)$ such that
\begin{itemize}
\item[\textbf{{1.}}] When $\alpha\in J_{\e}$ and $\ell_{1,s}<0$, we have the following possibilities: if $(\alpha-\beta(\e))\,a'(\alpha_{0})\leq 0$, then $\x=\sigma(\alpha,\e)$ is attracting; in contrast, if $(\alpha-\beta(\e))\,a'(\alpha_{0})>0$, then $\x=\sigma(\alpha,\e)$ is repelling and there exists a unique asymptotically attracting closed invariant curve $\Gamma_{\alpha,\e}$ in $U_{\e}$.
\item[\textbf{{2.}}] When $\alpha\in J_{\e}$ and $\ell_{1,s}>0$, we have the following possibilities: if $(\alpha-\beta(\e))\,a'(\alpha_{0})\geq 0$, then $\x=\sigma(\alpha,\e)$ is repelling; in contrast, if $(\alpha-\beta(\e))\,a'(\alpha_{0})<0$, then $\x=\sigma(\alpha,\e)$ is attracting and there exists a unique repelling closed invariant curve $\Gamma_{\alpha,\e}$ in $U_{\e}$.
 \end{itemize}
\end{proof}

Notice that when $r=s$ in Theorem \ref{TheoremA}, i.e., when $s$ is the first subindex such that both $\f_{s}\neq 0$ and $\ell_{1,s}\neq 0$, this hypothesis together with \textbf{H1} and \textbf{H2}, imply a Hopf bifurcation in the following system
\begin{equation}\label{ssystem}
 \dot{\x}= \f_{s}(\x;\alpha).    
\end{equation}
Thus, in such a case, Theorem \ref{TheoremA} tells us that a Hopf bifurcation of system \eqref{ssystem} determines the existence of a unique invariant closed curve of the map \eqref{Identityperturbation} emerging from a fixed point. However, more than that, Theorem \ref{TheoremA} also generalizes the previous result including the case when $r\neq s$ which could happen in many applications. 

\section{Torus bifurcation in nonsmooth differential equations}\label{section6}

This section aims to interpret Theorem~\ref{TheoremA} in the setting of the piecewise smooth systems \eqref{PSDE} introduced in Section~\ref{section1}.

To derive conditions for the occurrence of a Neimark--Sacker bifurcation in the associated time-$T$ map \eqref{Tmap}, we combine the regularity result of Proposition~\ref{reguTmap} with the expansion obtained via the Melnikov procedure \eqref{mapintroT}. Specifically, the time-$T$ map is a $C^r$ near-identity map, $r\geq k$, which admits the expansion
\begin{equation*}
  \mathcal{P}_{T}(\bx;\alpha,\varepsilon)
  = \bx
  + \varepsilon\,\Delta_{1}(\bx;\alpha)
  + \cdots
  + \varepsilon^{k}\Delta_{k}(\bx;\alpha)
  + \mathcal{O}(\varepsilon^{k+1}), \quad (\bx,\alpha,\e)\in \ov D\times \ov I\times (-\e_0,\e_0),
\end{equation*}
where $\Delta_{i}$, $i\in\{1,\ldots,k\},$ are the Melnikov functions. Also, let $r\in\{1,2,\ldots,k\}$ be the smallest index for which
$\Delta_{1}=\Delta_{2}=\cdots=\Delta_{r-1}=0,$ and $\Delta_{r}\neq 0.$
Under this assumption, the expansion simplifies to
\begin{equation}\label{Tmap3}
  \mathcal{P}_{T}(\bx;\alpha,\varepsilon)
  = \bx
  + \varepsilon^{r}\Delta_{r}(\bx;\alpha)
  + \cdots
  + \varepsilon^{k}\Delta_{k}(\bx;\alpha)
  + \mathcal{O}(\varepsilon^{k+1}) \quad (\bx,\alpha,\e)\in \ov D\times \ov I\times (-\e_0,\e_0).
\end{equation}

Theorem~\ref{TheoremA} can therefore be applied to the reduced form \eqref{Tmap3}, providing sufficient conditions for the bifurcation of a normally hyperbolic invariant closed curve. As a result, under such conditions, the piecewise smooth system \eqref{PSDE} possesses an invariant torus that persists under sufficiently small perturbations. Accordingly, in the notation of Section \ref{section5}, we set
\[
\f(\bx;\alpha,\varepsilon)=\mathcal{P}_{T}(\bx;\alpha,\varepsilon),
\qquad
\f_i(\bx;\alpha)=\Delta_i(\bx;\alpha), \quad i\in\{r,\ldots,k\}.
\]
With these identifications, and noting that fixed points and invariant closed curves of $\mathcal{P}_{T}$ correspond, respectively, to periodic solutions and invariant tori of \eqref{PSDE}, Theorem~\ref{TheoremA} yields the following result:

\begin{mtheorem}\label{MainTheorem}
Let $r\in\lbrace{1,2,\cdots,k\rbrace}$ be such that $\Delta_{1}=\cdots=\Delta_{r-1}=0$ but $\Delta_{r}\neq 0$. 
Assume that hypotheses \textbf{H1}, \textbf{H2}, and \textbf{H3} hold, and that $\ell_{1,j}\neq 0$ for some $j\in\{r,\ldots,k\}$. Let $s\in\{r,\ldots,k\}$
be the smallest index such that $\ell_{1,s}\neq 0$. Then, for every $\e>0$ sufficiently small, there exist a smooth curve $\beta(\e)\in J$, with $\beta(0)=\alpha_{0}$, and open sets $\mathcal{U}_{\e}\subset S^{1}\times D$ of the $T$-periodic solution  $\Psi(t,\beta(\e),\e)$ and $J_{\e}\subset J$ around $\beta(\e)$ satisfying the following assertions 

\begin{itemize}
\item[\textbf{{1.}}] When $\alpha\in J_{\e}$ and  $\ell_{1,s}<0$, we have the following possibilities: if  $(\alpha-\beta(\e))\,a'(\alpha_{0})\leq 0$, then $\Psi(t,\beta(\e),\e)$ is attracting; if $(\alpha-\beta(\e))\,a'(\alpha_{0})>0$, then $\Psi(t,\beta(\e),\e)$ is repelling and the piecewise smooth system \eqref{PSDE} has a unique attracting invariant tori $T_{\alpha,\e}$ in $\mathcal{U}_{\e}$, which persists under small perturbation.
\item[\textbf{{2.}}] When $\alpha\in J_{\e}$ and $\ell_{1,s}>0$, we have the following possibilities: if $(\alpha-\beta(\e))\,a'(\alpha_{0})\geq 0$, then $\Psi(t,\beta(\e),\e)$ is repelling; if $(\alpha-\beta(\e))\,a'(\alpha_{0})<0$, then $\Psi(t,\beta(\e),\e)$ is attracting and the piecewise smooth system \eqref{PSDE} has a unique repelling invariant tori $T_{\alpha,\e}$ in $\mathcal{U}_{\e}$, which persists under small perturbation.
 \end{itemize}
\end{mtheorem}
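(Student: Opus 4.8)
The plan is to present Theorem~\ref{MainTheorem} as a direct specialization of Theorem~\ref{TheoremA} to the time-$T$ map of \eqref{PSDE}, followed by the translation of the resulting discrete dynamics into the flow dynamics on $S^1\times D$.

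First I would observe that, by Proposition~\ref{reguTmap}, the time-$T$ map $\mathcal{P}_T(\cdot;\alpha,\e)$ is of class $C^r$ with $r\ge k$, and that the Melnikov procedure provides the near-identity expansion \eqref{Tmap3}. Making the identifications $\f=\mathcal{P}_T$ and $\f_i=\Delta_i$ for $i\in\{r,\dots,k\}$, the map $\mathcal{P}_T$ is exactly of the form \eqref{Identityperturbation}. Since $\Delta_r\neq 0$, and since hypotheses \textbf{H1}, \textbf{H2}, \textbf{H3} together with $\ell_{1,j}\neq 0$ for some $j$ are assumed, all the hypotheses of Theorem~\ref{TheoremA} are satisfied. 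Applying Theorem~\ref{TheoremA} then yields the curve of fixed points $\sigma(\alpha,\e)$, the critical curve $\beta(\e)$ with $\beta(0)=\alpha_{0}$, the neighborhoods $U_\e$ and $J_\e$, and the dichotomy on the stability of $\sigma(\alpha,\e)$ versus the bifurcation of a unique normally hyperbolic invariant closed curve $\Gamma_{\alpha,\e}\subset U_\e$, according to the signs of $\ell_{1,s}$ and of $(\alpha-\beta(\e))\,a'(\alpha_{0})$.

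Next I would carry out the translation between the time-$T$ map and the flow. Setting $\Psi(t,\alpha,\e)=\varphi(t,\sigma(\alpha,\e);\alpha,\e)$, the fixed point $\sigma(\alpha,\e)$ corresponds to the $T$-periodic solution $\Psi(t,\beta(\e),\e)$ of \eqref{PSDE}, and the stability of $\sigma(\alpha,\e)$ as a fixed point of $\mathcal{P}_T$ coincides with the stability of $\Psi$ as a periodic solution. In accordance with the convention adopted in Section~\ref{section1}, the invariant closed curve $\Gamma_{\alpha,\e}$ is, by definition, an invariant torus of \eqref{PSDE}; concretely, its suspension $T_{\alpha,\e}=\{\varphi(t,\bx;\alpha,\e):\bx\in\Gamma_{\alpha,\e},\,t\in[0,T]\}\subset S^{1}\times D$ is flow-invariant, and the attracting (resp.\ repelling) character of $\Gamma_{\alpha,\e}$ transfers directly to $T_{\alpha,\e}$. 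Taking $\mathcal{U}_\e$ to be the saturation of $U_\e$ under the flow furnishes the neighborhood of the statement, and combining the two possibilities produces items~1 and~2 verbatim.

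Finally, persistence under small perturbations follows from the normal hyperbolicity of $\Gamma_{\alpha,\e}$ noted in Remark~\ref{rem:normhypcurve}: normal hyperbolicity of the invariant closed curve of $\mathcal{P}_T$ lifts to normal hyperbolicity of the associated torus $T_{\alpha,\e}$, and the standard theory of normally hyperbolic invariant manifolds guarantees that $T_{\alpha,\e}$ survives under $C^{1}$-small perturbations of \eqref{PSDE}. I expect the only genuinely delicate point to be the discrete-to-continuous transfer of the stability type and of normal hyperbolicity, namely verifying that the transverse multipliers of the suspended torus are precisely those of the linearization of $\mathcal{P}_T$ along $\Gamma_{\alpha,\e}$; everything else is bookkeeping between the two settings, made almost immediate by the definition of an invariant torus adopted for \eqref{PSDE}.
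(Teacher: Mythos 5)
Your proposal is correct and follows essentially the same route as the paper: the paper proves Theorem~\ref{MainTheorem} precisely by invoking Proposition~\ref{reguTmap} and the Melnikov expansion \eqref{Tmap3}, making the identifications $\f=\mathcal{P}_T$ and $\f_i=\Delta_i$, applying Theorem~\ref{TheoremA}, and then translating fixed points and invariant closed curves of $\mathcal{P}_T$ into periodic solutions and invariant tori of \eqref{PSDE}, with persistence coming from the normal hyperbolicity noted in Remark~\ref{rem:normhypcurve}. Your additional remarks on the suspension $T_{\alpha,\e}$ and the transfer of stability are consistent with (and somewhat more explicit than) the paper's own argument.
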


Observe that when we have $s=r$ in Theorem \ref{MainTheorem}, then Hypotheses \textbf{H1}, \textbf{H2}, and $\ell_{1,r}\neq 0$ ensure that system \eqref{firstsys} undergoes a Hopf bifurcation.  Consequently, Theorem \ref{MainTheorem} states that a Hopf bifurcation in the system \eqref{firstsys}, implies the emerging of an isolated invariant tori in the piecewise smooth system \eqref{PSDE}.

\section{Invariant torus in a 3D piecewise linear system} \label{AppliTorus}

To exemplify the application of Theorem \ref{MainTheorem}, the following class of 3D piecewise linear systems is considered:
\begin{equation}\label{ds0}
	\begin{pmatrix}
x' \\
y' \\
z'
\end{pmatrix}= X(x,y,z,\e)=\left\{
\begin{aligned}   
	X^{+}(x,y,z)=&\begin{pmatrix}
-y \\
x \\
0
\end{pmatrix} +\e\, A^{+}\,\begin{pmatrix}
x \\
y \\
z
\end{pmatrix},\quad y>0\\
	X^{-}(x,y,z)=&\begin{pmatrix}
-y \\
x \\
0
\end{pmatrix}+\e\, A^{-}\,\begin{pmatrix}
x \\
y \\
z
\end{pmatrix}+\e^2 B^{-}\,\begin{pmatrix}
x \\
y \\
z
\end{pmatrix}, \quad y<0
\end{aligned}
\right.
\end{equation}
where
\begin{equation*}
A^{+}=\begin{pmatrix}
	0 & 0 & -\dfrac{4 \pi ^2 b}{8+9 \pi ^2} \\
	0 & -1 & -1 \\
	0 & 0 & \dfrac{1}{2}
\end{pmatrix},
\qquad
A^{-}=\begin{pmatrix}
	\alpha & 4 & -1 \\
	0 & 0 & 0 \\
	0 & -1 & 0
\end{pmatrix},
\qquad
B^{-}=\begin{pmatrix}
	0 & 0 & 0 \\
	0 & 0 & b \\
	0 & 0 & 0
\end{pmatrix}.
\end{equation*}
Here, $b\neq 0$ and $\alpha$ denotes the bifurcation parameter. Using the results from Sections \ref{section5} and \ref{lyapcoefficients}, we derive conditions ensuring the presence of an isolated invariant torus in the phase space of system \eqref{ds0}. Our approach consists into rewriting system \eqref{ds0} in the standard form \eqref{PSDE} and, after an appropriate change of variables, showing that the associated time-$T$ map fulfills all the assumptions of Theorem \ref{TheoremA}. The statement of this result is presented in the theorem below.

\begin{mtheorem}\label{mainresult} For every $\e>0$ sufficiently small there exist a unique limit cycle $\Psi(t,\alpha,\e)$ of the 3D piecewise linear system \eqref{ds0}. In addition, assuming that $b\neq 0$, there exist a smooth curve $\beta(\e)$ fulfilling
 $$\beta(\e)=-\e\left(\dfrac{24 b}{8+9 \pi ^{2}}-1\right)+\mathcal{O}(\e^2),$$
 and open sets $\mathcal{U}_{\e}\subset \R^{3}$ of the limit cycle $\Psi(t,\beta(\e),\e)$ and $J_{\e}$ around $\beta(\e)$ such that the following assertions hold.
 
 \item[\textbf{{1.}}] When $b>0$ and $\alpha\in J_{\e}$, we have the following possibilities: if  $\alpha-\beta(\e)\leq 0$, then $\Psi(t,\beta(\e),\e)$ is asymptotically attracting; if $\alpha-\beta(\e)>0$, then $\Psi(t,\beta(\e),\e)$ is repelling and the 3D piecewise linear system \eqref{ds0} has a unique attracting invariant torus in $\mathcal{U}_{\e}$, which persists under small perturbation.
\item[\textbf{{2.}}] When $b<0$ and $\alpha\in J_{\e}$, we have the following possibilities: if $\alpha-\beta(\e)\geq 0$, then $\Psi(t,\beta(\e),\e)$ is repelling; if $\alpha-\beta(\e)<0$, then $\Psi(t,\beta(\e),\e)$ is asymptotically attracting and the 3D piecewise linear system \eqref{ds0} has a unique repelling invariant torus in $\mathcal{U}_{\e}$, which persists under small perturbation.
\end{mtheorem}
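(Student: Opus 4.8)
The plan is to recast the autonomous system \eqref{ds0} into the non-autonomous standard form \eqref{PSDE} and then verify the hypotheses of Theorem~\ref{MainTheorem} one by one. First I would pass to polar-like coordinates $(x,y)=(\rho\cos\theta,\rho\sin\theta)$, under which the unperturbed part $(-y,x,0)$ becomes the rigid rotation $\dot\theta=1$, $\dot\rho=\dot z=0$ of period $T=2\pi$. Since $\dot\theta=1+\mathcal{O}(\varepsilon)$, the angle $\theta$ may be used as the new time, reducing \eqref{ds0} to a planar non-autonomous $2\pi$-periodic equation for $(\rho,z)$. The switching surface $\{y=0\}$ becomes $\{\sin\theta=0\}$, i.e. the \emph{constant} values $\theta=0,\pi$, so the reduced system is exactly of the form \eqref{PSDE} with a single constant switching function $\theta_1\equiv\pi$. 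Because the switching times are constant, Proposition~\ref{reguTmap} applies (so $\mathcal{P}_T$ is a smooth near-identity map) and, moreover, the correction term $\widetilde{\g}_2$ in \eqref{mel1} vanishes, so the Melnikov functions coincide with the classical averaged functions \eqref{prom}.

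Next I would compute the Melnikov functions. The first one, $\Delta_1(\rho,z;\alpha)=\g_1$, is obtained by integrating the reduced first-order field separately over $\theta\in[0,\pi]$ (region $y>0$, governed by $A^{+}$) and $\theta\in[\pi,2\pi]$ (region $y<0$, governed by $A^{-}$). Since the perturbation in \eqref{ds0} is linear in the state, a direct integration produces a field that is \emph{linear} in $(\rho,z)$; identifying $\f_1=\Delta_1$, the guiding system \eqref{firstsys} is linear with trace proportional to $\alpha$, so that at $\alpha_0=0$ its linearization has a pair of purely imaginary eigenvalues $\pm i\,b_0$, $b_0>0$, establishing \textbf{H1}. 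The transversality condition \textbf{H2} then reduces to checking that the real part of these eigenvalues crosses zero with nonzero speed in $\alpha$, which is a one-line computation on the trace of the $2\times 2$ guiding matrix. The fixed point $\sigma(\alpha,\varepsilon)$ furnished by Lemma~\ref{lema2.1} is precisely the persistent periodic solution, i.e. the unique limit cycle $\Psi(t,\alpha,\varepsilon)$.

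The decisive structural point is that, since $\f_1$ is linear, its bilinear and trilinear forms vanish and hence the first-order Lyapunov coefficient satisfies $\ell_{1,1}=0$; this forces $s\geq 2$, placing us in the genuinely higher-order regime $r\neq s$ covered by Theorem~\ref{TheoremA}. I must therefore push the expansion to $\Delta_2=\g_2$, whose nonlinear part in $(\rho,z)$ is generated both by the $\theta$-reparametrization correction $\dot\theta=1+\mathcal{O}(\varepsilon)$ entering $\g_2$ through $D_{\bx}F_1\int_0^sF_1\,dt$ and by the explicit $\varepsilon^2B^{-}$ term of \eqref{ds0}. After a linear change of coordinates normalizing the linear part to the rotation–scaling block required by \textbf{H3}, I would evaluate the forms $B_\varepsilon,C_\varepsilon$ in \eqref{multilinearfunctions2} and insert them into \eqref{firstLyapunovcoefficient}, obtaining the leading coefficient $\ell_{1,2}$. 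The tuned rational constants in $A^{+}$ and $A^{-}$ (notably $4\pi^2 b/(8+9\pi^2)$) are designed so that $\ell_{1,2}$ is a nonzero multiple of $b$ with $\mathrm{sign}(\ell_{1,2})=-\mathrm{sign}(b)$, while the same second-order data yield the bifurcation curve $\beta(\varepsilon)=-\varepsilon\big(24b/(8+9\pi^2)-1\big)+\mathcal{O}(\varepsilon^2)$ of Lemma~\ref{lem2.2}.

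With \textbf{H1}, \textbf{H2}, \textbf{H3} verified and the sign of $\ell_{1,2}$ in hand, the result follows by applying Theorem~\ref{MainTheorem} with $r=1$ and $s=2$: for $b>0$ one has $\ell_{1,2}<0$, giving the attracting-torus branch of case~1, whereas for $b<0$ one has $\ell_{1,2}>0$, giving the repelling-torus branch of case~2; normal hyperbolicity (Remark~\ref{rem:normhypcurve}) yields persistence under small perturbations. I expect the hard part to be the accurate computation of $\Delta_2$ and of $\ell_{1,2}$: this requires the full second-order expansion of the time-$T$ map, including the reparametrization contributions, together with careful bookkeeping of the piecewise integrals split at $\theta=\pi$, which is exactly where the delicate cancellations producing the prescribed constants occur.
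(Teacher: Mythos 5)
Your proposal is correct and follows essentially the same route as the paper's proof: cylindrical coordinates with $\theta$ as time, constant switching at $\theta=0,\pi$ so the Melnikov functions reduce to the averaged ones, verification of \textbf{H1}--\textbf{H2} from the affine first-order function $\Delta_1$ (whose vanishing quadratic and cubic parts force $\ell_{1,1}=0$ and $s=2>r=1$), a linear normalization for \textbf{H3}, computation of $\ell_{1,2}$ with $\mathrm{sign}(\ell_{1,2})=-\mathrm{sign}(b)$, and an appeal to Theorem~\ref{MainTheorem}. The only slight imprecision is calling $\Delta_1$ ``linear'' rather than affine, which does not affect the argument.
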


\begin{proof}
First, we transform system~\eqref{ds0} into the form~\eqref{PSDE} by means of the cylindrical change of coordinates,
\[
(x,y,z)=(r\cos\theta,r\sin\theta,z).
\]
In these coordinates, system~\eqref{ds0} becomes
\begin{equation}\label{ds1}
\begin{pmatrix}
r'\\
\theta'\\
z'
\end{pmatrix}
=
\begin{cases}
X_{*}^{+}(r,\theta,z,\varepsilon), & \theta>\pi,\\[2mm]
X_{*}^{-}(r,\theta,z,\varepsilon), & \theta<\pi,
\end{cases}
\end{equation}
where
\begin{align*}
X_{*}^{+}(r,\theta,z,\varepsilon)
&=
\begin{pmatrix}
\displaystyle
\frac{1}{4}\varepsilon \sin\theta\big(-4r\sin\theta-4z+\pi^{2}+4\big)
-\frac{4\pi^{2}bz\varepsilon\cos\theta}{8+9\pi^{2}}
\\[2mm]
\displaystyle
1+\frac{4\pi^{2}bz\varepsilon\sin\theta}{(8+9\pi^{2})r}
+\frac{\varepsilon\cos\theta\big(-4r\sin\theta-4z+\pi^{2}+4\big)}{4r}
\\[2mm]
\displaystyle
\frac{1}{2}\varepsilon(z-5)
\end{pmatrix},\\[3mm]
X_{*}^{-}(r,\theta,z,\varepsilon)
&=
\begin{pmatrix}
\displaystyle
\frac{1}{2}\varepsilon\big(\alpha r\cos(2\theta)+\alpha r+4r\sin(2\theta)-2z\cos\theta\big)
+2bz\varepsilon^{2}\sin\theta
\\[2mm]
\displaystyle
1+\frac{\varepsilon\cos\theta\big(bz\varepsilon-\alpha r\sin\theta\big)+\varepsilon\sin\theta\big(z-4r\sin\theta\big)}{r}
\\[2mm]
\displaystyle
-r\varepsilon\sin\theta
\end{pmatrix}.
\end{align*}

Since $\theta'>0$, we can take $\theta$ as the new independent variable. Computing the quotients $r'/\theta'$ and $z'/\theta'$ in~\eqref{ds1} and expanding the resulting expressions around $\varepsilon=0$, we obtain the reduced system, which is written in the form \eqref{PSDE},
\begin{equation}\label{ds2}
(\dot r,\dot z)
=
\varepsilon F_{1}(\theta,r,z)
+\varepsilon^{2}F_{2}(\theta,r,z)
+\mathcal{O}(\varepsilon^{3}),
\end{equation}
where
\begin{align*}
F_{1}^{+}(\theta,r,z)
&=
\left(
\frac{1}{4}\sin\theta\big(-4r\sin\theta-4z+\pi^{2}+4\big)
-\frac{4\pi^{2}bz\cos\theta}{8+9\pi^{2}},
\,
\frac{z-5}{2}
\right),\\
F_{1}^{-}(\theta,r,z)
&=
\left(
\frac{1}{2}\big(\alpha r\cos(2\theta)+\alpha r+4r\sin(2\theta)-2z\cos\theta\big),
\,
-r\sin\theta
\right),
\end{align*}
and
\begin{align*}
F_{2}^{+}(\theta,r,z)
&=
\Bigg(
\frac{16\pi^{2}bz\sin\theta+(8+9\pi^{2})\cos\theta(-4r\sin\theta-4z+\pi^{2}+4)}
{16(8+9\pi^{2})^{2}r}
\\
&\qquad\times
\Big(16\pi^{2}bz\cos\theta
+(8+9\pi^{2})\sin\theta(4r\sin\theta+4z-\pi^{2}-4)\Big),
\\
&\qquad
\frac{z-5}{8(8+9\pi^{2})r}
\Big((8+9\pi^{2})\cos\theta(4r\sin\theta+4z-\pi^{2}-4)
-16\pi^{2}bz\sin\theta\Big)
\Bigg),\\[2mm]
F_{2}^{-}(\theta,r,z)
&=
\Bigg(
\frac{\sin(2\theta)\big(-\alpha r\cos\theta-4r\sin\theta+z\big)^{2}}{2r}
+bz\sin\theta,
\\
&\qquad
\sin^{2}\theta\big(-\alpha r\cos\theta-4r\sin\theta+z\big)
\Bigg).
\end{align*}

Observe that the time-$T$ map associated with~\eqref{ds2} can be written as
\begin{equation*}
\mathcal{P}_{T}(r,z;\alpha,\varepsilon)
:= \x
+ \varepsilon\,\Delta_{1}(r,z;\alpha)
+ \varepsilon^{2}\,\Delta_{2}(r,z;\alpha)
+ \mathcal{O}(\varepsilon^{3}),\quad (r,z,\alpha,\e)\in \ov D\times\ov I\times (-\e_0,\e_0),
\end{equation*}
where the Melnikov functions $\Delta_{1}$ and $\Delta_{2}$ are given by~\eqref{mel1}. We may also take 
\[
D=(-20,20)\times(-20,20) \quad\text{and}\quad I=(-1/2,1/2).
\] 

In the present setting, the switching functions are constant and given by $0$ and $\pi$. Consequently,
$D_{\x}\theta_{j}(\x)\equiv 0$, and the integrals in~\eqref{prom} provide
\begin{align*}
\Delta_{1}(r,z;\alpha)
&=
\left(
\frac{\pi\alpha r}{2}
+\frac{1}{2}\big(-\pi r-4z+\pi^{2}+4\big),
\;
2r+\frac{\pi}{2}(z-5)
\right),\\[2mm]
\Delta_{2}(r,z;\alpha)
&=
\Bigg(
\frac{1}{8}\Big(
\pi\big(\alpha(\pi(\alpha-2)+8)-4\big)r
-8z\big((\pi-2)\alpha+2b\big)
+2\pi(\pi^{2}+4)\alpha
\Big)
\\
&\qquad
+\frac{\pi}{8}
\Big(
\pi\Big(\frac{32b(3z-5)}{8+9\pi^{2}}+r-\pi\Big)+16
\Big),
\\
&\qquad
\pi\left(\frac{1}{2}(\alpha-2)r+z+\pi\right)
+\frac{\pi^{2}}{8}(z-5)\left(1-\frac{32bz}{(9\pi^{2}+8)r}\right)
+8r-4z+4
\Bigg).
\end{align*}

Now, in order to check hypotheses \textbf{H1} and \textbf{H2}, define
\[
r_{0}(\alpha)=\frac{\pi\,(16-\pi^{2})}{\pi^{2}(\alpha-1)+16},
\qquad
z_{0}(\alpha)=\frac{\pi^{2}(5\alpha-1)+16}{\pi^{2}(\alpha-1)+16}.
\]
Observe that 
\[
(r_{0}(\alpha),z_{0}(\alpha))\in\overline{D}\quad \text{and}\quad\Delta_{1}\bigl(r_{0}(\alpha),z_{0}(\alpha);\alpha\bigr)=0,
\qquad \forall\,\alpha\in\overline{I}.
\]
Moreover, the eigenvalues of the Jacobian matrix
$D_{(r,z)}\Delta_{1}\bigl(r_{0}(\alpha),z_{0}(\alpha);\alpha\bigr)$
are given by $a(\alpha)\pm i\,b(\alpha)$, where
\[
a(\alpha)=\frac{\pi\alpha}{4},
\qquad
b(\alpha)=\frac{\sqrt{64-\pi^{2}(\alpha-2)^{2}}}{4}.
\]
Consequently, hypotheses \textbf{H1} and \textbf{H2} are satisfied at
$\alpha_{0}=0$.

Also, since
\begin{equation*}
\det\!\left(D_{(r,z)}\Delta_{1}\bigl(r_{0}(\alpha),z_{0}(\alpha);\alpha\bigr)\right)
=\tfrac{1}{4}\pi^{2}(\alpha-1)+4\neq 0, \quad \forall \,\alpha\in\overline{I},
\end{equation*}
the Implicit Function Theorem ensures the existence of
$\varepsilon_{1}>0$ and a unique smooth function $
\sigma:\overline{I}\times(-\varepsilon_{1},\varepsilon_{1})\to\overline{D}$ 
such that
\[
\sigma(\alpha,0)=\bigl(r_{0}(\alpha),z_{0}(\alpha)\bigr)
\quad\text{and}\quad
\mathcal{P}_{T}\bigl(\sigma(\alpha,\varepsilon);\alpha,\varepsilon\bigr)
=\sigma(\alpha,\varepsilon), \quad \forall\, (\alpha,\varepsilon)\in\overline{I}\times(-\varepsilon_{1},\varepsilon_{1})
\]
In particular, $\sigma(\alpha,\varepsilon)$ is a fixed point of
$\mathcal{P}_{T}$ and therefore corresponds to an isolated periodic solution
$\Psi(t,\alpha,\varepsilon)$ of \eqref{ds2}.

Furthermore, $\sigma(\alpha,\varepsilon)$ admits the expansion
\[
\sigma(\alpha,\varepsilon)
=\bigl(r_{0}(\alpha),z_{0}(\alpha)\bigr)
+\varepsilon\bigl(R_{1}(\alpha),S_{1}(\alpha)\bigr)
+\mathcal{O}(\varepsilon^{2}),
\]
where
\begin{equation*}
\begin{aligned}
R_{1}(\alpha)
&=\frac{\pi\, r_{1}(\alpha)}
{4\bigl(8+9\pi^{2}\bigr)\bigl(\pi^{2}(\alpha-1)+16\bigr)^{2}},\\[2mm]
S_{1}(\alpha)
&=\frac{z_{1}(\alpha)}
{\bigl(8+9\pi^{2}\bigr)\bigl(\pi^{2}(\alpha-1)+16\bigr)^{2}},
\end{aligned}
\end{equation*}
and
\begin{align*}
r_{1}(\alpha)=&\;8 \pi^{5} (17 \alpha-35) \alpha
-16 \pi^{4}\bigl(\alpha (45 \alpha-25 b+149)+13 b-196\bigr) \\
&-128 \pi^{2}\bigl(-29 b+302+5 \alpha (\alpha+3 b+7)\bigr)
-2048\bigl(\alpha+3 (b+6)\bigr)\\
&-9 \pi^{7} (\alpha-1) \alpha
+128 \pi^{3} (\alpha+16)\alpha
+2048 \pi  \alpha
+36 \pi^{6} (2 \alpha-1),
\\[2mm]
z_{1}(\alpha)=&\;128 \pi^{2}\bigl(\alpha (5 \alpha-9 b+7)-21 b+14\bigr)
-16 \pi^{4}\bigl(\alpha (10 \alpha (b-4)+13 b+89)-11 b-119\bigr)\\
&+2048 (\alpha-b)
-9 \pi^{7} (\alpha-1)
-18 \pi^{6}\bigl(5 (\alpha-2) \alpha+7\bigr)
+8 \pi^{5} (17 \alpha-35)\\
&+128 \pi^{3} (\alpha+16)
+2048 \pi .
\end{align*}

Finally, we compute the expansion in $\varepsilon$ of the first Lyapunov coefficient $\ell_{1}^{\varepsilon}$, as given by \eqref{l1forIdentity}.

In order to bring the map $\mathcal{P}_{T}(r,z;\alpha,\varepsilon)$ into the normal form \eqref{Identityperchange}, a sequence of changes of variables is required. First, the equilibrium point $\sigma(\alpha,\varepsilon)$ is translated to the origin by setting
\[
(r,z)=(u,v)+\sigma(\alpha,\varepsilon).
\]
Next, we compute the parameter $\beta(\varepsilon)$ as indicated in Lemma~\ref{lem2.2} and shift the parameter by writing $\alpha=\tau+\beta(\varepsilon)$, where
\[
\beta(\varepsilon)
=
-\varepsilon\left(\frac{24 b}{8+9\pi^{2}}-1\right)
+\mathcal{O}(\varepsilon^{2}).
\]
For the remaining transformations, we assume that we are at the critical parameter value $\tau=0$, that is, $\alpha=\beta(\varepsilon)$. In order for the resulting system to satisfy hypothesis $\textbf{H3}$, we introduce the linear transformation defined by the matrix
\begin{equation}\label{Linearchange}
	\begin{aligned}
		\mathcal{L}(\varepsilon)
		&=
		\begin{pmatrix}
			-\dfrac{\pi}{2} & -\dfrac{1}{2}\sqrt{16-\pi^{2}} \\
			2 & 0
		\end{pmatrix}\\
		&+\varepsilon
		\begin{pmatrix}
			-\dfrac{16\pi b}{8+9\pi^{2}}-\dfrac{\pi^{2}}{4}+\pi+2
			&
			\dfrac{\sqrt{16-\pi^{2}}\bigl(-16 b-(\pi-4)(8+9\pi^{2})\bigr)}{4(8+9\pi^{2})}
			\\[2mm]
			0 & 0
		\end{pmatrix}
		+\mathcal{O}(\varepsilon^{2}).
	\end{aligned}
\end{equation}
We then apply the transformation
\[
(u,v)=\mathcal{L}(\varepsilon)\,\mathbf{y}^{\intercal},
\qquad
\mathbf{y}=(y_{1},y_{2}).
\]
With these transformations, the map attains the desired expansion
\[
\boldsymbol{\mathcal{H}}_{\varepsilon}(\mathbf{y},0)
=
\mathcal{A}_{\varepsilon}\mathbf{y}
+\frac{1}{2}\mathcal{B}_{\varepsilon}(\mathbf{y},\mathbf{y})
+\frac{1}{6}\mathcal{C}_{\varepsilon}(\mathbf{y},\mathbf{y},\mathbf{y})
+\mathcal{O}(||\mathbf{y}||^{4}),
\]
where
\begin{align*}
\mathcal{A}_{\varepsilon}
&=
\begin{pmatrix}
1+\left(\dfrac{\pi^{2}}{8}-2\right)\varepsilon^{2}
&
-\dfrac{1}{2}\sqrt{16-\pi^{2}}\,\varepsilon-\sqrt{16-\pi^{2}}\,\varepsilon^{2}
\\[2mm]
\dfrac{1}{2}\sqrt{16-\pi^{2}}\,\varepsilon+\sqrt{16-\pi^{2}}\,\varepsilon^{2}
&
1+\left(\dfrac{\pi^{2}}{8}-2\right)\varepsilon^{2}
\end{pmatrix}
+\mathcal{O}(\varepsilon^{3}).
\end{align*}
Also, denoting $\mathbf{u}=(u_{1},u_{2})$, $\mathbf{v}=(v_{1},v_{2})$, and $\mathbf{w}=(w_{1},w_{2})$, the bilinear form $\mathcal{B}_{\varepsilon}$ can be written as
\[
\mathcal{B}_{\varepsilon}(\mathbf{u},\mathbf{v})
=
\varepsilon^{2}\bigl(\mathcal{B}^{1}_{\varepsilon}(\mathbf{u},\mathbf{v}),
\mathcal{B}^{2}_{\varepsilon}(\mathbf{u},\mathbf{v})\bigr)
+\mathcal{O}(\varepsilon^{3}),
\]
where
\begin{align*}
\mathcal{B}^{1}_{\varepsilon}(\mathbf{u},\mathbf{v})
&=
\dfrac{b\Bigl(5\pi\sqrt{16-\pi^{2}}(u_{1}v_{2}+u_{2}v_{1})
-2\pi^{2}u_{2}v_{2}+32u_{2}v_{2}\Bigr)}
{\pi(8+9\pi^{2})},\\[2mm]
\mathcal{B}^{2}_{\varepsilon}(\mathbf{u},\mathbf{v})
&=
-\dfrac{b\Bigl(5\pi\sqrt{16-\pi^{2}}(u_{1}v_{2}+u_{2}v_{1})
-2\pi^{2}u_{2}v_{2}+32u_{2}v_{2}\Bigr)}
{\sqrt{16-\pi^{2}}(8+9\pi^{2})},
\end{align*}
and the trilinear form $\mathcal{C}_{\varepsilon}$ admits the expansion
\[
\mathcal{C}_{\varepsilon}(\mathbf{u},\mathbf{v},\mathbf{w})
=
\varepsilon^{2}\bigl(\mathcal{C}^{1}_{\varepsilon}(\mathbf{u},\mathbf{v},\mathbf{w}),
\mathcal{C}^{2}_{\varepsilon}(\mathbf{u},\mathbf{v},\mathbf{w})\bigr)
+\mathcal{O}(\varepsilon^{3}),
\]
where
\begin{align*}
\mathcal{C}^{1}_{\varepsilon}(\mathbf{u},\mathbf{v},\mathbf{w})
&=
w_{1}\left(
\dfrac{5\sqrt{16-\pi^{2}}\,b\,(v_{1}v_{2}+u_{2}v_{1})}{3(8+9\pi^{2})}
-\dfrac{2(\pi^{2}-16)b\,u_{2}v_{2}}{\pi(8+9\pi^{2})}
\right)\\
&\quad
-w_{2}\left(
\dfrac{\sqrt{16-\pi^{2}}\,b\bigl(-5\pi^{2}v_{1}^{2}
+3\pi^{2}u_{2}v_{2}-48u_{2}v_{2}\bigr)}{3\pi^{2}(8+9\pi^{2})}
+\dfrac{2(\pi^{2}-16)b\,(v_{1}v_{2}+u_{2}v_{1})}{\pi(8+9\pi^{2})}
\right),
\end{align*}
and
\begin{align*}
\mathcal{C}^{2}_{\varepsilon}(\mathbf{u},\mathbf{v},\mathbf{w})
&=
w_{1}\left(
-\dfrac{5\pi b\,(v_{1}v_{2}+u_{2}v_{1})}{3(8+9\pi^{2})}
-\dfrac{2\sqrt{16-\pi^{2}}\,b\,u_{2}v_{2}}{8+9\pi^{2}}
\right)\\
&\quad
-w_{2}\left(
\dfrac{b\bigl(5\pi^{2}v_{1}^{2}
-3\pi^{2}u_{2}v_{2}+48u_{2}v_{2}\bigr)}{3\pi(8+9\pi^{2})}
+\dfrac{2\sqrt{16-\pi^{2}}\,b\,(v_{1}v_{2}+u_{2}v_{1})}{8+9\pi^{2}}
\right).
\end{align*}

Before computing the Lyapunov coefficients as described in Section~\ref{lyapcoefficients}, we observe that, in the present setting, the functions $B_{1}(\bu,\bv)$ and $C_{1}(\bu,\bv,\w)$ appearing in the expansions
\eqref{multilinearfunctions}--\eqref{multilinearfunctions2} vanish. Consequently, the multilinear terms take the form
\begin{align*}
\mathcal{B}_{\varepsilon}(\bu,\bv) &= \varepsilon^{2} B_{2}(\bu,\bv) + \mathcal{O}(\varepsilon^{3}),\\
\mathcal{C}_{\varepsilon}(\bu,\bv,\w) &= \varepsilon^{2} C_{2}(\bu,\bv,\w) + \mathcal{O}(\varepsilon^{3}).
\end{align*}

As a direct consequence of equation \eqref{cofi} in the Appendix, we obtain
\[
g_{20}^{1} = g_{11}^{1} = g_{02}^{1} = g_{21}^{1} = 0.
\]
Proceeding with the computation of the remaining coefficients, we find
\begin{align*}
\textrm{Re}(g_{20}) &= \varepsilon^{2}\textrm{Re}(g_{20}^{2}) + \mathcal{O}(\varepsilon^{3}) \\
&= \varepsilon^{2}\Bigg(
-\frac{8 \sqrt{\frac{2}{16-\pi^{2}}}\, b}{8+9\pi^{2}}
-\frac{5\pi b}{\sqrt{2}\,(8+9\pi^{2})}
\Bigg)
+ \mathcal{O}(\varepsilon^{3}),\\[0.3cm]
\textrm{Re}(g_{11}) &= \varepsilon^{2}\textrm{Re}(g_{11}^{2}) + \mathcal{O}(\varepsilon^{3}) \\
&= \varepsilon^{2}\Bigg(
\frac{8 \sqrt{\frac{2}{16-\pi^{2}}}\, b}{8+9\pi^{2}}
\Bigg)
+ \mathcal{O}(\varepsilon^{3}),\\[0.3cm]
\textrm{Re}(g_{02}) &= \varepsilon^{2}\textrm{Re}(g_{02}^{2}) + \mathcal{O}(\varepsilon^{3}) \\
&= \varepsilon^{2}\Bigg(
\frac{8 \sqrt{\frac{2}{16-\pi^{2}}}\, b}{8+9\pi^{2}}
+\frac{5\pi b}{\sqrt{2}\,(8+9\pi^{2})}
\Bigg)
+ \mathcal{O}(\varepsilon^{3}),\\[0.3cm]
\textrm{Re}(g_{21}) &= \varepsilon^{2}\textrm{Re}(g_{21}^{2}) + \mathcal{O}(\varepsilon^{3}) \\
&= \varepsilon^{2}\Bigg(
\frac{4 b}{\pi(8+9\pi^{2})}
-\frac{2\pi b}{3(8+9\pi^{2})}
\Bigg)
+ \mathcal{O}(\varepsilon^{3}).
\end{align*}

Finally, substituting these expressions into formula \eqref{l1PSDE} in the Appendix, we obtain the Taylor expansion of the first Lyapunov coefficient:
\begin{equation}\label{applilyco}
\ell_{1}^{\varepsilon}
= \varepsilon^{2}\textrm{Re}(g_{21}^{2}) + \mathcal{O}(\varepsilon^{3}) = -\varepsilon^{2}\frac{(\pi^{2}-6)b}{3\pi(8+9\pi^{2})}
+ \mathcal{O}(\varepsilon^{3}).
\end{equation}
Therefore, it follows from \eqref{applilyco} that
\[
\ell_{1,1}=0\quad\text{and}\quad \ell_{1,2}
= -\frac{(\pi^{2}-6)b}{3\pi(8+9\pi^{2})}.
\]
The conclusion then follows directly from Theorem~\ref{MainTheorem}.
\end{proof}

\section{Conclusions}
In this work, we investigated the emergence of invariant tori in non-smooth dynamical systems, with particular emphasis on piecewise linear differential equations. Motivated by the relevance of such models in applications and by the scarcity of analytical results concerning higher-dimensional invariant objects in this setting, we developed a framework that combines perturbative methods with bifurcation analysis of near-identity maps.

Our first main contribution was to establish sufficient conditions for the existence and stability of limit tori in a broad class of non-autonomous $T$-periodic piecewise smooth differential systems (Theorem~\ref{MainTheorem}). By proving that the associated time-$T$ map is smooth and admits a near-identity expansion, we reduced the problem to the analysis of Neimark--Sacker bifurcation in near-identity maps (Theorem~\ref{TheoremA}). This approach extends recent developments in the smooth setting to the non-smooth framework, thereby providing an analytical mechanism for torus bifurcation in piecewise smooth systems.

As a second and more concrete contribution (Theorem~\ref{mainresult}), we applied the general theory to a family of 3D piecewise linear differential systems. We proved analytically the existence of a limit torus arising from a Neimark--Sacker bifurcation of the associated time-$T$ map. As far as we are aware, this constitutes the first example of a 3D piecewise linear system exhibiting an isolated invariant torus, analytic detected. Moreover, we characterized its stability and showed that the torus persists under sufficiently small perturbations.

Beyond the specific results obtained here, this work opens several directions for future research. Possible extensions include the development of a theory of normal hyperbolicity in the piecewise smooth context and, subsequently, the extension of higher-order averaging techniques to the investigation of invariant tori in higher-dimensional piecewise smooth systems, thereby generalizing the results of~\cite{novaes2024invariant,pereira2023mechanism}. We believe that the methods introduced in this paper provide a solid foundation for advancing the understanding of higher-dimensional invariant structures in non-smooth dynamical systems.

\section*{Acknowledgements}
MRC was partially supported by the São Paulo Research Foundation (FAPESP), Grant No. 2023/06076-0. DDN was partially supported by the São Paulo Research Foundation (FAPESP), Grant No. 2024/15612-6; by the Conselho Nacional de Desenvolvimento Científico e Tecnológico (CNPq), Grant No. 301878/2025-0; and by the Coordenação de Aperfeiçoamento de Pessoal de Nível Superior - Brasil (CAPES), through the MATH-AmSud program, Grant No. 88881.179491/2025-01. JSGR was partially supported by the Coordenação de Aperfeiçoamento de Pessoal de Nível Superior - Brasil (CAPES) - Finance Code 001.

\section{Appendix: Lyapunov Coefficients}\label{lyapcoefficients}

Now, considering the Taylor expansion up to order $2$, we explain how we can get an expression of $\ell_{1}^{\e}$ when $r=1$ and $k=2$. To begin, for $i=1,2$, we define
\begin{equation}\label{notaBiandCi}
\begin{aligned}
    \widetilde{B}_{1}^{i}(\bu,\bv)&=\frac{d}{d\e}\, \B_{\e}^{i}(\bu,\bv)\bigg|_{\e=0}, & \qquad\widetilde{C}_{1}^{i}(\mathbf q,\mathbf q, \bar{\mathbf q})&=\frac{d}{d\e}\,  \mathcal{C}_{\e}^{i}(\mathbf q,\mathbf q, \bar{\mathbf q})\bigg|_{\e=0},\\
    \widetilde{B}_{2}^{i}(\bu,\bv)&=\frac{d}{d\e^{2}}\, \B_{\e}^{i}(\bu,\bv)\bigg|_{\e=0}, &  \qquad \widetilde{C}_{2}^{i}(\mathbf q,\mathbf q, \bar{\mathbf q})&=\frac{d}{d\e^{2}}\, \mathcal{C}_{\e}^{i}(\mathbf q,\mathbf q, \bar{\mathbf q})\bigg|_{\e=0},
\end{aligned}    
\end{equation}
  and denote
  \begin{align*}
  \widetilde{B}_{j}(\bu,\bv)&=(\widetilde{B}_{j}^{1}(\bu,\bv), \widetilde{B}_{j}^{2}(\bu,\bv)),\\   
  \widetilde{C}_{j}(\mathbf q,\mathbf q,\bar{\mathbf q})&=(\widetilde{C}_{j}^{1}(\mathbf q,\mathbf q, \bar{\mathbf q}),\widetilde{C}_{j}^{2}(\mathbf q,\mathbf q, \bar{\mathbf q})),
  \end{align*}
for $j=1,2$. 

Moreover
\begin{equation}\label{cofi}
\begin{aligned}
 g_{11}^{j}&=\left<\mathbf q,\widetilde{B}_{j}(\mathbf q, \bar{\mathbf q})\right>,\,\,
    g_{20}^{j}=\left<\mathbf q,\widetilde{B}_{j}(\mathbf q, \mathbf q)\right>,\\   
    g_{02}^{j}&=\left<\mathbf q,\widetilde{B}_{j}(\bar{\mathbf q}, \bar{\mathbf q})\right>,\,\, g_{21}^{j}=\left<\mathbf q,\widetilde{C}_{j}(\mathbf q,\mathbf q, \bar{\mathbf q})\right>,
\end{aligned}
      \end{equation}
for $j=1,2$. 

Using equation \eqref{expressionsg_ij's}, we get
\begin{align*}
g_{11}&=  \left<\mathbf q,\e\widetilde{B}_{1}(\mathbf q, \bar{\mathbf q})\right> + \left<\mathbf q,\e^{2}\widetilde{B}_{2}(\mathbf q, \bar{\mathbf q})\right> +\mathcal{O}(\e^{3})\\&=\e\,g_{11}^{1} + \e^{2}\,g_{11}^{2} + \mathcal{O}(\e^{3}).  
\end{align*}

Analogously, we obtain 
\begin{align*}
g_{20}&=\e\,g_{20}^{1} + \e^{2}\,g_{20}^{2} + \mathcal{O}(\e^{3}),\\
g_{02}&=\e\,g_{02}^{1} + \e^{2}\,g_{02}^{2} + \mathcal{O}(\e^{3}),\\
g_{21}&=\e\,g_{21}^{1} + \e^{2}\,g_{21}^{2} + \mathcal{O}(\e^{3}).
\end{align*}

On the other hand, using equation \eqref{expreeigenIdentity}, we have
\begin{equation}\label{expreeigen}
    e^{i\theta_{\e}}=1 + \sum_{j=1}^{2}\e^{j}(a_{j}+i\, b_{j}) +\mathcal{O}(\e^{3}),
\end{equation}

However, from Hypothesis \textbf{{H1}}, we have that $a_{1}=0$ and $b_{1}=b_{0}>0$, so $\widetilde{a}_{\e}=\e^{2}\,a_{2}$, and $\widetilde{b}_{\e}=\e\,b_{0}+\e^{2}\,b_{2}$. In consequence, the equation \eqref{expreeigen} writes 
$$e^{i\theta_{\e}}=\kappa_{\e}(0)=1 + i\e\,b_{0}+\e^{2}(a_{2}+i\,b_{2}) +\mathcal{O}(\e^{3}).$$ 

Thus, we get the following expressions
\begin{equation}\label{gij}
\begin{aligned}
  \frac{e^{-i\theta_{\e}}g_{21}}{2}=&\e\frac{g_{21}^{1}}{2} + \frac{\e^{2}}{2}\left(g_{21}^{2}-ib_{0}g_{21}^{1}\right)+\mathcal{O}(\e^{3}),\\
\frac{(1-2\,e^{i\theta_{\e}})\,e^{-2i\theta_{\e}}}{2(1-e^{i\theta_{\e}})}\,g_{20}g_{11}=&-\e\frac{i}{2b_{0}}\,g_{11}^{1}\,g_{20}^{1}-\e^{2}\frac{i}{2{b_{0}}^{2}}\big(b_{0}\,g_{11}^{2}g_{20}^{1} + b_{0}\,g_{11}^{1}g_{20}^{2}\\
&-b_{2}\,g_{11}^{1}g_{20}^{1}+ ia_{2}\,g_{11}^{1}g_{20}^{1}\big)+\mathcal{O}(\e^{3}),\\
|g_{11}|^{2}=& \frac{1}{2}\left(\e^{2}\,|g_{11}^{1}|^{2}+\mathcal{O}(\e^{3})\right),\\
 |g_{02}|^{2}=& \frac{1}{2}\left(\e^{2}\,|g_{02}^{1}|^{2}+\mathcal{O}(\e^{3})\right).
\end{aligned}    
\end{equation}

Therefore, substituting all the expressions in equation \eqref{gij} into equation \eqref{lyapuforIdentity}, we obtain
\begin{equation}\label{l1PSDE}
\begin{aligned}
\ell_{1}^{\e}=&\frac{\e}{2}\bigg(\text{Re}\,(g_{21}^{1}) + \frac{1}{b_{0}}\text{Re}\,(i\,g_{11}^{1}g_{20}^{1})\bigg) + \frac{\e^{2}}{2}\bigg(\text{Re}\,(g_{21}^{2})-b_{0}\text{Re}\,(i\,g_{21}^{1})+\frac{1}{b_{0}}\text{Re}(i\,g_{11}^{2}g_{20}^{1})\\
   &+\frac{1}{b_{0}}\text{Re}\,(i\,g_{11}^{1}g_{20}^{2})-\frac{b_{2}}{{b_{0}}^{2}}\text{Re}\,(i\,g_{11}^{1}g_{20}^{1})-\frac{a_{2}}{{b_{0}}^{2}}\text{Re}\,(g_{11}^{1}g_{20}^{1}) - |g_{11}^{1}|^{2}-\frac{1}{2}|g_{02}^{1}|^{2}\bigg)+\mathcal{O}(\e^{3})\cdot    
\end{aligned}    
\end{equation}

Moreover,
\begin{align*}
 \ell_{1,1}= \frac{1}{2}\bigg(\text{Re}\,(g_{21}^{1}) &+ \frac{1}{b_{0}}\text{Re}\,(i\,g_{11}^{1}g_{20}^{1})\bigg),\\
 \ell_{1,2}=\frac{1}{2}\bigg(\text{Re}\,(g_{21}^{2})&-b_{0}\text{Re}\,(i\,g_{21}^{1})+\frac{1}{b_{0}}\text{Re}(i\,g_{11}^{2}g_{20}^{1})
   +\frac{1}{b_{0}}\text{Re}\,(i\,g_{11}^{1}g_{20}^{2})\\
   &-\frac{b_{2}}{{b_{0}}^{2}}\text{Re}\,(i\,g_{11}^{1}g_{20}^{1})-\frac{a_{2}}{{b_{0}}^{2}}\text{Re}\,(g_{11}^{1}g_{20}^{1}) - |g_{11}^{1}|^{2}-\frac{1}{2}|g_{02}^{1}|^{2}\bigg)\cdot
\end{align*}

Thus, for instance, using the above expression for $\ell_{1,1}$ together with equations~\eqref{notaBiandCi} and~\eqref{cofi}, we obtain the first-order term in $\varepsilon$ of the Lyapunov coefficient expansion as follows:
\begin{equation*}\label{l11forIdentity}
 \begin{aligned}
 \ell_{1,1}&=\frac{1}{8}\bigg(\parde[^{3}\f_{1}^{2}(\x_{\alpha_{0}};\alpha_{0})]{y^{3}} + \parde[^{3}\f_{1}^{1}(\x_{\alpha_{0}};\alpha_{0})]{x\partial y^{2}} + \parde[^{3}\f_{1}^{2}(\x_{\alpha_{0}};\alpha_{0})]{x^{2}\partial y} + \parde[^{3}\f_{1}^{1}(\x_{\alpha_{0}};\alpha_{0})]{x^{3}}\bigg)\\ 
 &+ \frac{1}{8\,b_{0}}\bigg(\parde[^{2}\f_{1}^{1}(\x_{\alpha_{0}};\alpha_{0})]{y^{2}}\parde[^{2}\f_{1}^{2}(\x_{\alpha_{0}};\alpha_{0})]{y^{2}} + \parde[^{2}\f_{1}^{1}(\x_{\alpha_{0}};\alpha_{0})]{y^{2}}\parde[^{2}\f_{1}^{1}(\x_{\alpha_{0}};\alpha_{0})]{x\partial y}\\
 &-\parde[^{2}\f_{1}^{2}(\x_{\alpha_{0}};\alpha_{0})]{y^{2}}\parde[^{2}\f_{1}^{2}(\x_{\alpha_{0}};\alpha_{0})]{x\partial y} + \parde[^{2}\f_{1}^{1}(\x_{\alpha_{0}};\alpha_{0})]{x\partial y}\parde[^{2}\f_{1}^{1}(\x_{\alpha_{0}};\alpha_{0})]{x^{2}}\\
 &- \parde[^{2}\f_{1}^{2}(\x_{\alpha_{0}};\alpha_{0})]{x\partial y}\parde[^{2}\f_{1}^{2}(\x_{\alpha_{0}};\alpha_{0})]{x^{2}} - \parde[^{2}\f_{1}^{1}(\x_{\alpha_{0}};\alpha_{0})]{x^{2}}\parde[^{2}\f_{1}^{2}(\x_{\alpha_{0}};\alpha_{0})]{x^{2}}\bigg)\cdot
\end{aligned}   
\end{equation*}

\bibliographystyle{abbrv}
\bibliography{Bibliography}
\end{document}